\documentclass[12pt]{amsart}

\usepackage{amsmath,amssymb,amsthm,mathtools,mathrsfs}
\usepackage{graphicx}
\usepackage{tikz}
\usepackage{pgfplots}
\pgfplotsset{compat=1.18}
\usepackage{booktabs}
\usepackage{array}
\usepackage{enumitem}
\usepackage{hyperref}
\usepackage{caption}
\usepackage{subcaption}
\usepackage{microtype}
\usepackage{bm}
\usepackage{geometry}
\definecolor{uuuuuu}{rgb}{0.26666666666666666,0.26666666666666666,0.26666666666666666}
\definecolor{qqwuqq}{rgb}{0.,0.39215686274509803,0.}
\definecolor{zzttqq}{rgb}{0.6,0.2,0.}
\definecolor{xdxdff}{rgb}{0.49019607843137253,0.49019607843137253,1.}
\definecolor{qqqqff}{rgb}{0.,0.,1.}
\definecolor{cqcqcq}{rgb}{0.7529411764705882,0.7529411764705882,0.7529411764705882}
\definecolor{sqsqsq}{rgb}{0.12549019607843137,0.12549019607843137,0.12549019607843137}
\definecolor{uuuuuu}{rgb}{0.26666666666666666,0.26666666666666666,0.26666666666666666}
\definecolor{ffqqqq}{rgb}{1,0,0}
\definecolor{xdxdff}{rgb}{0.49019607843137253,0.49019607843137253,1}

\definecolor{yqqqyq}{rgb}{0.5019607843137255,0,0.5019607843137255}
\definecolor{qqqqff}{rgb}{0,0,1}
\definecolor{ffqqqq}{rgb}{1,0,0}
\definecolor{ffqqff}{rgb}{1,0,1}

\theoremstyle{plain}

\newtheorem{theorem1}[subsubsection]{Theorem}

\newtheorem{lemma}[subsection]{Lemma}
\newtheorem{defi}[subsection]{Definition}

\newtheorem{prop}[subsection]{Proposition}
\newtheorem{prop1}[subsubsection]{Proposition}
\newtheorem{lemma1}[subsubsection]{Lemma}

\theoremstyle{definition}

\newtheorem{exam1}[subsubsection]{Example}

\newtheorem{remark}[subsection]{Remark}
\newtheorem{remark1}[subsubsection]{Remark}

\newtheorem{note}[subsection]{Note}

\newcommand{\tit}{\textit}

\newcommand{\D}[1]{\mathbb{#1}}

\newcommand{\te}{\text}

\newcommand{\nd}{\noindent}

\begin{document}

 \nd To appear,\tit{ Mathematics} 
\title[Discrete Quantization on Spherical Geometries]{%
Discrete Quantization on Spherical Geometries: Explicit Models, Computations, and Didactic Exposition}

 \author{Mrinal Kanti Roychowdhury}
\address{Department of Mathematics and Statistical Science, University of Texas Rio Grande Valley, Edinburg, TX 78539, USA}
\email{mrinal.roychowdhury@utrgv.edu}

	\subjclass[2020]{Primary 94A34; Secondary 53A05, 53C22.}
	\keywords {Quantization; Discrete quantization; Spherical geometry; Geodesic distance; Optimal n-means; Voronoi partitions; Small circles; Quantization error.}
	
	\date{}
	\maketitle
	
	\pagestyle{myheadings}\markboth{Mrinal Kanti Roychowdhury}{Discrete Quantization on Spherical Geometries}

\begin{abstract}
This article presents a comprehensive and analytically explicit study of optimal
discrete quantization on spherical geometries equipped with the geodesic metric.  Focusing on highly symmetric configurations on the unit sphere $\D S^2$, we investigate three explicit models of discrete uniform distributions and derive closed--form expressions for their optimal quantizers and corresponding mean--square quantization errors.

\smallskip

\noindent (I) For $N$ equally spaced points on the equator, we obtain exact error formulas for both divisible and non--divisible cases $n \nmid N$, demonstrating that optimal Voronoi cells form contiguous arcs with midpoint representatives.  
(II) For two antipodally symmetric small circles at latitudes $\pm \phi_0$, each with $M$ longitudes, we prove a no--cross--circle Voronoi phenomenon, establish symmetry--preserving optimality, and derive finite--sum error formulas together with sharp curvature--dependent bounds and asymptotics.  
(III) For a single small circle at latitude $\phi_0$, we obtain analogous exact error formulas and show that curvature reduces distortion by a factor of $\cos^2 \phi_0$, while preserving the $n^{-2}$ decay rate.

\smallskip

\noindent Across all models, we rigorously establish the ``block--midpoint principle'': optimal Voronoi cells on a circle are contiguous azimuthal blocks and their optimal representatives are the corresponding azimuthal midpoints. Numerical tables and illustrative figures highlight curvature effects and compare divisible and non--divisible cases. An algorithmic appendix provides pseudocode and a small, commented Python implementation to facilitate reproducibility.

\smallskip

\noindent Written with didactic clarity while maintaining full mathematical rigor, this work
bridges geometric intuition and analytic precision, providing explicit benchmark
models that illuminate curvature effects and support further developments in
quantization on curved manifolds.
\end{abstract}
 
 \section{Introduction}

Quantization theory concerns the approximation of a probability distribution by a finite set of representative points, commonly referred to as \emph{codepoints} or \emph{means}. Given a probability measure $P$ on a metric space $(M,d)$, the objective is to select a finite subset
\[
Q = \{q_1, q_2, \dots, q_n\} \subset M
\]
that minimizes the expected squared distortion
\[
V(Q;P) := \int_{M} \min_{1 \le j \le n} d(x,q_j)^2 \, dP(x),
\]
and the corresponding minimum value
\[
V_{n,2}(P) := \inf_{Q \subset M,\, |Q| \le n} V(Q;P)
\]
is called the \emph{quantization error of order two}. Any set $Q^*$ attaining this infimum is called an \emph{optimal set of $n$--means}.

In the discrete setting considered in this paper, where $P_N$ is the uniform probability measure supported on $N$ sample points $\{x_0, x_1, \dots, x_{N-1}\}$, the expected squared distortion takes the finite--sum form
\[
V(Q;P_N) := \frac{1}{N} \sum_{k=0}^{N-1} \min_{1 \le j \le n} d(x_k, q_j)^2.
\]

Quantization has played a fundamental role in information theory, signal processing, data compression, and high–dimensional data analysis; see Zador~\cite{Zador1982}, Gray–Neuhoff~\cite{GrayNeuhoff1998}, and the authoritative monograph by Gersho–Gray~\cite{GershoGray1992}. Classical algorithmic constructions of optimal quantizers, including Lloyd–Max–type iterations and centroidal Voronoi approaches, further illustrate the computational foundations of the subject; see, for example, 
\cite{Lloyd1982, LBG1980, DFG1999}. Statistical aspects such as consistency of $k$-means and convergence of optimal quantizers have been studied in influential works including Pollard~\cite{Pollard1982}, while a comprehensive mathematical treatment of quantization theory is provided by Graf–Luschgy~\cite{GrafLuschgy2000}.

 \begin{remark}[Terminology: Quantization vs. Quantum Mechanics]
The term \emph{quantization} in this paper is used in the classical sense of \emph{information theory and signal processing}, where it refers to approximating a probability distribution by a finite set of representative points. This notion is conceptually distinct from \emph{quantization in quantum mechanics}, which concerns discrete spectra of physical observables arising from operator theory. No physical interpretation in terms of quantum uncertainty or wave mechanics is assumed here.
\end{remark}

\subsection{Quantization on curved manifolds}
When the underlying space $(M,d)$ is a curved Riemannian manifold, the intrinsic geometry significantly influences the structure of optimal quantizers. The Euclidean distance must be replaced by the geodesic distance, and curvature introduces new effects including non–Euclidean Voronoi regions, symmetry constraints, and geometric distortion. The unit sphere
\[
\D S^2 = \{x \in \mathbb{R}^3 : \|x\| = 1\},
\]
equipped with the geodesic distance $d_G(x,y) = \arccos(x \cdot y)$, provides a natural and highly symmetric setting in this context. Applications arise in directional statistics, geospatial and environmental data sciences, computer vision, spherical signal processing, and machine learning on manifolds. A standard reference for directional statistics is Mardia–Jupp~\cite{MardiaJupp2000}, while tools for statistics on Riemannian manifolds are developed by Pennec~\cite{Pennec2006}. Classical geometric foundations relevant to this setting can be found in K. Tapp~\cite{DoCarmo1994}.

\subsection{Scope of this article}
The goal of this article is not to develop a general theory of optimal quantization on
arbitrary Riemannian manifolds. Rather, we focus on a small number of highly symmetric,
explicitly solvable discrete models on the unit sphere $\mathbb{S}^{2}$. These models—
great circles and small circles—are chosen deliberately because they permit rigorous
analysis of Voronoi structure, exact finite-$n$ error formulas, and a precise
quantification of curvature effects under the intrinsic geodesic metric. Explicitly
solvable models on curved manifolds are comparatively rare, and they serve as useful
benchmark configurations for both theoretical investigations and computational studies
in spherical quantization.

While the ambient space is the two–dimensional sphere $\D S^{2}$, the discrete supports
considered here lie on one–dimensional submanifolds. This restriction is intentional:
it allows the geometric effects of curvature to be isolated in a setting where explicit
analysis is possible and closed–form results can be derived.

This article provides a detailed and self-contained exposition
of \emph{discrete} quantization on the sphere $\D S^{2}$. While the literature on quantization on
$\D S^{2}$ primarily focuses on continuous distributions or asymptotic behavior (see
Roychowdhury~\cite{Roychowdhury2025} for an expository treatment), our goal here is to fully
analyze three concrete and highly symmetric discrete configurations for which exact solutions
can be derived. The models considered are:

\begin{itemize}
\item[\textbf{(I)}] \emph{Great Circle (Equator):} A discrete uniform distribution supported on $N$ equally spaced points on the equator. We derive closed forms for optimal $n$--means and the quantization error, including the non–divisible case $n \nmid N$.

\item[\textbf{(II)}] \emph{Two Small Circles:} Two antipodally symmetric parallels at latitudes $\pm \phi_0$, each sampled at $M$ longitudes. We show that optimal quantizers inherit the antipodal symmetry, prove a no–cross–circle Voronoi property, and obtain finite–sum formulas with curvature–dependent bounds and asymptotics.

\item[\textbf{(III)}] \emph{One Small Circle:} A single small circle at latitude $\phi_0$ with $N$ equally spaced points. We again obtain exact quantization formulas and quantify curvature effects, demonstrating a universal $\cos^2(\phi_0)$ scaling of the error relative to the equator.
\end{itemize}

A unifying methodological theme is the \emph{block–midpoint principle}: on any circle—great or small—with equally spaced discrete support, optimal Voronoi cells form contiguous azimuthal blocks, and their representatives are the azimuthal midpoints. This principle serves as the one–dimensional engine driving the analysis of all three models.

\subsection{Novelty and Contributions of This Work}

The main contributions of this article are as follows:

\begin{itemize}
\item We develop a unified framework for discrete spherical quantization that
provides analytically tractable benchmark models, complementing the expository
treatment of continuous spherical quantization in \cite{Roychowdhury2025}.

\item For the equatorial model, we derive explicit formulas for optimal $n$--means
and quantization error for both divisible and non--divisible cases $n\nmid N$,
yielding a complete discrete analogue of the classical $n^{-2}$ error law on the
unit circle $S^{1}$.

\item For two antipodally symmetric small circles, we prove a \emph{no--cross--circle}
Voronoi property, establish symmetry--preserving optimality, and obtain closed-form
finite--sum error expressions together with curvature--dependent bounds and
asymptotics.

\item For a single small circle, we derive exact quantization formulas and rigorously
quantify curvature effects, proving a universal $\cos^{2}(\phi_{0})$ scaling of the
error while preserving the intrinsic $n^{-2}$ rate.

\item We include illustrative figures, numerical tables, and an algorithmic appendix
with pseudocode and a short Python implementation to facilitate verification and
reproducibility.
\end{itemize}

Overall, this work bridges geometric intuition with analytic precision, serving both as a reference for discrete spherical quantization and as a stepping–stone to more advanced studies of quantization on curved manifolds.

\subsection{Organization of the paper}
Section~\ref{sec2} introduces the necessary notation, spherical parametrizations, and geometric identities on $\D S^2$. Section~\ref{secblock} develops the one–dimensional block–midpoint principle on a circle, which forms the basis for our spherical analysis. Sections~\ref{sec:model1}--\ref{sec:model3} contain the core results for Models~I--III, respectively: the equator, two antipodally symmetric small circles, and a single small circle. Section~\ref{sec:curvature-plot} presents numerical illustrations, tables, and curvature plots that compare the models and highlight key geometric effects. Section~\ref{sec:stability} discusses stability, uniqueness, and algorithmic aspects, including pseudocode to reproduce the computations. Section~\ref{sec:discussion} concludes the paper with a summary and directions for future research.

Throughout the paper we adopt a transparent presentation that highlights
the geometric and analytic structure of the results while maintaining full
mathematical rigor.

\section{Preliminaries: Notation, Parametrizations, and Sums} \label{sec2}

In this section we fix the notation, review the basic parametrizations of the sphere,
and record a few simple but essential algebraic identities that will be repeatedly used
throughout the paper.  
The aim is to make the geometric setting completely explicit so that
the reader can visualize how the later constructions are carried out.

\subsection{The unit sphere and the geodesic distance}

We work on the \emph{unit sphere}
\[
\mathbb S^2 = \{x=(x_1,x_2,x_3)\in \mathbb{R}^3 : \|x\|=1\},
\]
where $\|x\| = \sqrt{x_1^2+x_2^2+x_3^2}$ is the usual Euclidean norm.
The \emph{geodesic distance} between two points $x,y\in \mathbb S^2$ is defined by
\[
d_G(x,y) = \arccos(x\cdot y),
\]
where $x\cdot y = x_1y_1 + x_2y_2 + x_3y_3$ denotes the Euclidean inner product.
Geometrically, $d_G(x,y)$ measures the \emph{central angle} subtended by the 
two points at the center of the sphere, that is, the length of the shorter arc
connecting them along the great circle.

\begin{remark}[Intrinsic vs. extrinsic distance]
It is important to note that $d_G$ differs from the ordinary Euclidean distance 
$\|x-y\|$ in $\mathbb{R}^3$.
The former measures distance \emph{along the surface}, while the latter measures
distance \emph{through the interior}.
For small angular separations, however, the two are approximately related by
$\|x-y\|\approx d_G(x,y)$, since $\sin t \approx t$ for small~$t$.
\end{remark}

\begin{defi}[Equal–latitude geodesic offset]\label{def:sigma}
For a fixed latitude $\phi_0\in[0,\tfrac{\pi}{2}]$ and a longitudinal offset $\Delta\theta\in[-\pi,\pi]$, define
\[
\sigma(\phi_0,\Delta\theta)
:= d_G\!\big(x(\phi_0,\theta_0+\Delta\theta),\,x(\phi_0,\theta_0)\big)
= \arccos\!\Big(\sin^2\!\phi_0 \;+\; \cos^2\!\phi_0 \,\cos\Delta\theta\Big),
\]
where $x(\phi,\theta)=(\cos\phi\cos\theta,\;\cos\phi\sin\theta,\;\sin\phi)\in \D S^2$ and $d_G$ denotes the great–circle (geodesic) distance on $\D S^2$.
The quantity used in the distortion sums is its square,
\[
\sigma^2(\phi_0,\Delta\theta)\;:=\;\big[\sigma(\phi_0,\Delta\theta)\big]^2.
\]
This depends only on $\phi_0$ and the difference $\Delta\theta$ (i.e., it is independent of the base longitude $\theta_0$).
\end{defi}
Let us now give the following remark.  For details of the proof, see Appendix A. 
\begin{remark}[Basic properties and approximations of $\sigma(\phi_{0},\Delta\theta)$]\label{rem:basic-sigma}
For a fixed latitude $\phi_{0}\in[0,\pi/2]$ and a longitudinal offset $\Delta\theta\in[-\pi,\pi]$, recall
\begin{equation} \label {eq:sigma}
\sigma(\phi_{0},\Delta\theta)
=\arccos\!\big(\sin^{2}\phi_{0}+\cos^{2}\phi_{0}\cos\Delta\theta\big),
\end{equation} 
which equals the geodesic distance on $\D S^2$ between two points on the same small circle at latitude $\phi_{0}$ separated by $\Delta\theta$ in longitude. Then:

\smallskip
\begin{enumerate}
\item[(i)] $\sigma(\phi_{0},\Delta\theta)$ is even in $\Delta\theta$ and strictly increases with $|\Delta\theta|$ on $[0,\pi]$. Moreover,
\[
\sigma(\phi_{0},0)=0,\qquad \sigma(\phi_{0},\pi)=\pi-2\phi_{0}.
\]
\item[(ii)] (Small-angle behavior) As $|\Delta\theta|\to0$,
\begin{equation}\label{eq:sigma-approx}
\sigma(\phi_{0},\Delta\theta)=\cos\phi_{0}\,|\Delta\theta|+O(|\Delta\theta|^{3}),
\qquad 
\sigma(\phi_{0},\Delta\theta)^{2}
=\cos^{2}\phi_{0}\,(\Delta\theta)^{2}+O(|\Delta\theta|^{4}).
\end{equation} 
\item[(iii)] (Global bounds) For all $\Delta\theta\in[-\pi,\pi]$,
\[
2\,\cos\phi_{0}\,\big|\sin(\tfrac{\Delta\theta}{2})\big|
\;\le\;
\sigma(\phi_{0},\Delta\theta)
\;\le\;
|\Delta\theta|.
\]
In particular, the lower bound is asymptotically $\cos\phi_{0}|\Delta\theta|$ as $|\Delta\theta|\to0$.
\end{enumerate}
\end{remark}

\begin{note}
The notation $O(|\Delta\theta|^{4})$ follows the standard Big–O convention used in asymptotic analysis. It signifies that the remaining terms in the expansion are bounded in absolute value by a constant multiple of $|\Delta\theta|^{4}$ as $|\Delta\theta|\to 0$. Thus, after extracting the leading term $\cos^{2}\phi_{0}(\Delta\theta)^{2}$ from $\sigma(\phi_{0},\Delta\theta)^{2}$, the residual contribution becomes negligible at a rate at least of order $|\Delta\theta|^{4}$. Consequently, the quadratic term provides the dominant behavior for small longitudinal separations, which justifies the approximation $\sigma(\phi_{0},\Delta\theta)\approx \cos\phi_{0}\,|\Delta\theta|$ to a high degree of accuracy. For instance, if $|\Delta\theta|=0.01$, then $(\Delta\theta)^{2}=10^{-4}$ whereas $(\Delta\theta)^{4}=10^{-8}$, showing that the error term is four orders of magnitude smaller than the leading term.
\end{note} 
\begin{remark}[Geometric intuition for beginners]
On the equator ($\phi_{0}=0$), the effective radius of the circle is $1$, so the geodesic distance between two points separated by longitude $\Delta\theta$ is exactly $|\Delta\theta|$. At latitude $\phi_{0}$, the small circle has effective radius $\cos\phi_{0}$, so small longitudinal displacements produce geodesic distances scaled by $\cos\phi_{0}$. Squaring this explains why the factor $\cos^{2}\phi_{0}$ appears in the expansion of $\sigma(\phi_{0},\Delta\theta)^{2}$ and later in the curvature–dependent quantization error formulas.
\end{remark}

\subsection{Parametrizations of Great and Small Circles}

Every point $x \in \mathbb{S}^2$ can be represented using spherical coordinates
\[
x(\phi,\theta) = (\cos\phi \cos\theta,\ \cos\phi \sin\theta,\ \sin\phi),
\]
where:
\begin{itemize}
    \item $\phi \in \left[-\tfrac{\pi}{2}, \tfrac{\pi}{2}\right]$ is the \textbf{latitude}, measured from the equator. Thus,
    \[
    \phi = 0 \text{ at the equator},\quad 
    \phi > 0 \text{ in the northern hemisphere},\quad 
    \phi < 0 \text{ in the southern hemisphere}.
    \]
    \item $\theta \in [0,2\pi)$ is the \textbf{longitude}, measured counterclockwise from a fixed reference meridian.
\end{itemize}

With this convention, two important families of circles on $\mathbb{S}^2$ are described as follows:

\noindent\textbf{(i) Great Circle (Equator).}  
The equator corresponds to $\phi = 0$. Its points are given by
\[
x(\theta) = (\cos\theta,\, \sin\theta,\, 0), \qquad \theta \in [0,2\pi).
\]

\noindent\textbf{(ii) Small Circles at Latitude $\phi_0$.}  
For any fixed latitude $\phi_0 \in \left(-\tfrac{\pi}{2}, \tfrac{\pi}{2}\right)$, the parallel (or small circle) at height $\phi_0$ is
\[
x(\phi_0,\theta) = (\cos\phi_0 \cos\theta,\ \cos\phi_0 \sin\theta,\ \sin\phi_0),
\qquad \theta \in [0,2\pi).
\]
When $\phi_0 = 0$ this reduces to the equator. As $|\phi_0| \to \tfrac{\pi}{2}$, the small circle shrinks to the north or south pole, respectively.

\begin{remark}[Latitude vs.\ Colatitude]
Some references use colatitude instead of latitude.
Colatitude is measured from the north pole and takes values in $[0,\pi]$, and it is related to
the latitude $\phi$ by
\[
\text{colatitude} = \frac{\pi}{2} - \phi.
\]
Throughout this paper, we consistently use the latitude convention (measured from the equator)
to avoid unnecessary conversions.
\end{remark}

\subsection{A discrete centered square–sum identity}

A frequent pattern in our later computations involves sums of consecutive integers
centered around their mean.  For $m\in\mathbb{N}$ we have the exact identity
\begin{equation}\label{eq:centersum}
\sum_{j=0}^{m-1}\!\left(j-\frac{m-1}{2}\right)^{\!2}
   = \frac{m(m^2-1)}{12}.
\end{equation}
\begin{proof}[Derivation]
The left–hand side is a quadratic sum symmetric about zero.
One may compute it by separating terms around the midpoint or by using
the classical formula
$\sum_{j=0}^{m-1} j^2 = (m-1)m(2m-1)/6$.
After simplifying, one arrives at~\eqref{eq:centersum}.
\end{proof}

This identity remains valid for both even and odd~$m$ and will allow us
to express several quantization errors in compact analytic form.

\begin{remark}[Interpretation]
In the context of quantization on discrete circular lattices,
$j-\frac{m-1}{2}$ represents the \emph{offset} of a sample point from the
midpoint of its Voronoi block.
The average of the squared offsets measures how much
the data points within a block deviate from their representative,
hence~\eqref{eq:centersum} quantifies the local contribution of a single block
to the total distortion.
\end{remark}

The three relations~\eqref{eq:sigma}, \eqref{eq:sigma-approx}, and~\eqref{eq:centersum}
form the computational backbone of all subsequent sections.
We will now apply them to study how optimal Voronoi partitions
emerge naturally as contiguous arcs with midpoint representatives.

\section{A One--Dimensional Engine on Circles: Contiguous Blocks and Midpoints} \label{secblock}

Before analyzing the spherical models, it is convenient to study a simpler
one--dimensional situation that captures the essential geometric structure
of optimal partitions. 
We consider points equally spaced on a circle and show that
their optimal Voronoi cells (with respect to squared geodesic loss)
are always contiguous arcs, and that each representative point
is located precisely at the midpoint of its block.
This result will serve as the ``computational engine'' for all models in this paper.

\subsection{Setup and notation}

Let $\Delta>0$ denote the constant angular spacing between consecutive sample points,
and let
\[
\Theta_m = \{\theta_i,\,\theta_i+\Delta,\,\theta_i+2\Delta,\dots,\theta_i+(m-1)\Delta\}
\]
be a collection of $m$ consecutive longitudes on a circle
(centered at $\theta_i$).
The block $\Theta_m$ thus represents $m$ discrete points equally spaced along an arc
of total angular width $(m-1)\Delta$.

For any potential representative point $\theta$ chosen within this arc,
we measure the average squared deviation of the block points from~$\theta$:
\begin{equation}\label{eq:Fm-def}
F_m(\theta)
   := \frac{1}{m}\sum_{j=0}^{m-1}(\theta_i + j\Delta - \theta)^2.
\end{equation}
The goal is to find the value $\theta^*$ that minimizes $F_m(\theta)$,
and to determine the minimal value of $F_m$ at this point.

\begin{lemma}\label{lem:block}
Let $\Delta>0$ be the angular spacing between consecutive sample points on a circle, and let
\[
\Theta_m := \{\theta_i + j\Delta : j=0,1,\ldots,m-1\}
\]
be a block of $m$ consecutive points. For $\theta\in\mathbb{R}$, define
\[
F_m(\theta) := \frac{1}{m}\sum_{j=0}^{m-1} \bigl(\theta_i + j\Delta - \theta\bigr)^2.
\]
Then:
\begin{enumerate}[label=(\alph*),leftmargin=*]
    \item $F_m(\theta)$ is a strictly convex quadratic polynomial in $\theta$ and therefore has a unique minimizer.
    \item The unique minimizer is located at the midpoint of the angular span of the block, namely
    \[
    \theta^* = \theta_i + \frac{(m-1)\Delta}{2}.
    \]
    \item The minimal value is
    \[
    F_m(\theta^*) = \frac{\Delta^2}{12}\,(m^2 - 1).
    \]
    \item In a quantization problem on a circle with uniformly spaced sample points, no optimal Voronoi cell can be composed of two or more disjoint arcs. Any such non-contiguous assignment can be locally modified to strictly reduce the distortion. Consequently, each optimal Voronoi cell forms a single contiguous block of consecutive points.
\end{enumerate}
\end{lemma}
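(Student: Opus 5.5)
Parts (a)--(c) are elementary. Expanding the square, $F_m(\theta)=\theta^2-2\bar\theta\,\theta+\frac1m\sum_j(\theta_i+j\Delta)^2$, where $\bar\theta=\frac1m\sum_{j=0}^{m-1}(\theta_i+j\Delta)=\theta_i+\frac{(m-1)\Delta}{2}$. Hence $F_m''(\theta)=2>0$, which gives strict convexity and a unique minimizer; this settles (a). Setting $F_m'(\theta)=0$ gives $\theta^*=\bar\theta$, the midpoint of the block, which is (b). For (c), substitute $\theta^*$ and factor out $\Delta^2$:
\[
F_m(\theta^*)=\frac{\Delta^2}{m}\sum_{j=0}^{m-1}\Bigl(j-\tfrac{m-1}{2}\Bigr)^2 .
\]
Identity \eqref{eq:centersum} then yields $\frac{\Delta^2}{m}\cdot\frac{m(m^2-1)}{12}=\frac{\Delta^2(m^2-1)}{12}$. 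For the geodesic applications I will note that on the circle one should work with an unwrapped lift, i.e.\ real longitudes in an interval of length less than $2\pi$. Then $(\theta_i+j\Delta-\theta)^2$ is the squared arc length whenever $\theta$ lies in the arc spanned by the block.

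Part (d) is the substantive step, and I would argue it by a standard exchange argument on Voronoi cells. Let $Q=\{q_1,\dots,q_n\}$ be optimal, and assign each sample point to a nearest codepoint; these cells can be taken as the Voronoi cells of $Q$. On a circle, the set of points strictly closer to $q_j$ than to every other codepoint is an open arc. Its endpoints are the angular bisectors between $q_j$ and its two cyclic neighbours among the codepoints. So the sample points in the cell of $q_j$ are exactly the sample points in one arc, and these form a set of consecutive indices. Ties on a bisector are the only source of ambiguity. Suppose a cell, after tie-breaking, consisted of two disjoint runs separated by points assigned to a different codepoint $q_k$. Then some point $x$ separating the runs is assigned to $q_k$, yet lies on the arc between two points assigned to $q_j$. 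Nearest-point assignment together with the arc structure forces $d(x,q_j)\le d(x,q_k)$. Reassigning $x$ to $q_j$ therefore does not increase the distortion.

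After reassigning, I would re-optimize each codepoint. By (b), every cell's codepoint moves to the midpoint of its now contiguous block. By (a), this move strictly decreases the block cost unless the codepoint was already at that midpoint. This gives the strict improvement claimed for non-contiguous assignments, or at least shows that a contiguous optimal partition always exists. A cleaner alternative I would try first, if the cyclic bookkeeping gets messy, is a crossing argument. Suppose two cells interleave, with $a<b<c<d$ in cyclic order, $a,c$ in cell $A$, and $b,d$ in cell $B$. Then uncrossing the assignment strictly reduces $\sum(x-q)^2$, because the cross term $-2xq$ rewards pairing larger $x$ with larger $q$ (a rearrangement inequality).

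The main obstacle is making (d) rigorous on the \emph{circle} rather than the line. The squared geodesic distance is $\min(|\theta-q|,2\pi-|\theta-q|)^2$, which is not globally a quadratic in $\theta$. So the rearrangement or exchange step must be carried out only after choosing a lift in which all relevant points and codepoints lie within half-circles of one another. I would handle this by cutting the circle at the bisector opposite a codepoint, which reduces the problem to the line. Handling tie cases, where a point is equidistant from two codepoints, also requires stating (d) as ``can be modified without increase, and strictly if the codepoints were not midpoints.''
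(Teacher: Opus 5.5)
Your treatment of (a)--(c) matches the paper's: second derivative $2$, minimizer at the mean, then centered indices and \eqref{eq:centersum}. The two routes differ in (d).

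The paper uses a cardinality-preserving exchange. A boundary point of the gap, lying closer to the midpoint of $C_1$, is moved from $C_2$ into $C_1$, while a far-end point of $C_1$ is pushed into $C_2$. Strict convexity of the squared loss is then invoked to assert a strict decrease.

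You instead use the structure of nearest-point regions on a circle. For distinct codepoints, the cell of $q_j$ is the arc between its bisectors with its two cyclic neighbours, so the sample points it captures are automatically consecutive. This needs only that distance increases with angular separation, not convexity, so it carries over verbatim to $\sigma(\phi_0,\cdot)$ in Models II and III. It also avoids a step the paper asserts without verification: why sending a far-end point of $C_1$ to $C_2$ should lower the cost. The paper's exchange, by contrast, is phrased for partitions with fixed block sizes, which is the viewpoint reused in the smoothing argument of Appendix B.

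Your hedging can also be dropped. With distinct codepoints and $n\ge 2$, adjacent closed cells meet only at bisector points, and these sit at the ends of both neighbouring runs. So every nearest-point assignment is already contiguous. A non-contiguous assignment must therefore send some point to a non-nearest codepoint, and reassigning that point strictly reduces the distortion. Hence the strict form of (d) holds for distinct codepoints without any re-centering step. The lift concern also disappears, because each cell is an arc of length $(g_{j-1}+g_j)/2\le\pi$, where $g_{j-1},g_j$ are the gaps from $q_j$ to its neighbouring codepoints.
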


\begin{proof}Proof of parts (a)--(c): 
Equation~\eqref{eq:Fm-def} shows that $F_m$ is a quadratic polynomial in~$\theta$.
Differentiating twice gives
\[
F_m'(\theta) = -\frac{2}{m}\sum_{j=0}^{m-1}(\theta_i + j\Delta - \theta),
\qquad
F_m''(\theta) = 2 > 0.
\]
Thus $F_m$ is strictly convex and has a unique stationary point where $F_m'(\theta)=0$.
Solving for~$\theta$ yields
\[
\theta^* = \frac{1}{m}\sum_{j=0}^{m-1}(\theta_i + j\Delta)
         = \theta_i + \frac{(m-1)\Delta}{2}.
\]
This is exactly the midpoint of the block~$\Theta_m$.

To compute the minimal value, let us introduce centered indices
\[
u_j = j - \frac{m-1}{2}, \qquad j=0,1,\dots,m-1.
\]
At $\theta=\theta^*$ we have $\theta_i + j\Delta - \theta^* = u_j\Delta$,
so that
\[
F_m(\theta^*) = \frac{\Delta^2}{m}\sum_{j=0}^{m-1}u_j^2.
\]
Using the centered square--sum identity~\eqref{eq:centersum},
we obtain the closed formula
\[
F_m(\theta^*) = \frac{\Delta^2}{12}(m^2-1),
\]
which completes the first three parts of the lemma.

 Proof of part $(d)$: 
 
 Consider two adjacent Voronoi cells $C_1$ and $C_2$ corresponding to representatives
$\theta_1^{*}$ and $\theta_2^{*}$. Suppose that $C_1$ is not contiguous; then it contains
two disjoint arcs separated by a gap belonging to $C_2$. Let $x$ be a boundary point of the
gap that lies closer (in angular order) to the midpoint of $C_1$ than to that of $C_2$.
Reassigning $x$ from $C_2$ to $C_1$ and simultaneously transferring a point from the far end
of $C_1$ to $C_2$ preserves the cardinalities of the cells.

Because the squared loss $(\theta-\theta^{*})^{2}$ is strictly convex in the angular
coordinate, this exchange strictly decreases the total distortion: points that are closer
to the midpoint representative incur smaller squared deviation. Hence any configuration
with a non–contiguous Voronoi cell admits a local modification that lowers the loss.
By iterating this argument, all gaps can be removed, and therefore every optimal Voronoi
cell must be a single contiguous block of consecutive points.
\end{proof}

\begin{remark}[Geometric interpretation]
Each contiguous block behaves like a ``miniature segment'' of the circle.
Its representative $\theta^*$ sits exactly at its center of mass
(in angular coordinates), which minimizes the sum of squared distances.
This midpoint principle mirrors the fact that in $\mathbb{R}$,
the arithmetic mean minimizes the sum of squared deviations.
Thus, the spherical quantization problem along a circle
reduces locally to a one--dimensional quadratic minimization problem.
\end{remark}

\begin{remark}[Didactic Visualization]\label{R:3.4}
A helpful way to visualize the block--midpoint principle is to imagine $N$ equally spaced beads placed on a circular wire. The goal is to replace them with a smaller number of representative beads so that the total ``elastic tension''---measured by the sum of squared angular displacements of each bead from its assigned representative---is minimized. 

The optimal strategy is to group neighboring beads into \emph{contiguous arcs} and place one representative bead at the \emph{midpoint} of each arc. Any attempt to form non-contiguous groups forces beads that are far apart along the circle to share the same representative, increasing the total tension. Thus, the minimum tension configuration always corresponds to contiguous blocks with midpoint representatives.
\end{remark}

\begin{remark}
The quantity
\[
D(m) := \frac{\Delta^2}{12}(m^2 - 1)
\]
represents the total (unnormalized) distortion contributed by a block of $m$ consecutive points, where “unnormalized” means that we have not yet divided by the total number of sample points $N$. In the full quantization error $V_{n,2}(P_N)$, the contributions of all blocks are summed and then divided by $N$ to obtain the normalized mean distortion.
\end{remark}
\begin{remark}[Notation for later use]
The minimal quantity $D(m) := \frac{\Delta^2}{12}(m^2-1)$
represents the unnormalized distortion contributed by a block of size~$m$.
It will appear repeatedly in the formulas for $V_{n,2}(P_N)$
in the next sections.
\end{remark}
 
\begin{remark}
Lemma~\ref{lem:block} shows that for any discrete uniformly spaced support on a circle, optimal quantization has a simple local structure: each Voronoi cell is a single contiguous arc of consecutive points, and its representative is the midpoint of that arc. This one-dimensional principle will serve as the core engine for the spherical models considered in the next sections.
\end{remark}

\section{Model I: Great Circle (Equator)}\label{sec:model1} 

We now apply the block--midpoint principle to the simplest nontrivial
spherical setting: a discrete uniform distribution supported on the
equator of the unit sphere.  
This example illustrates all main ideas in their
simplest geometric form and allows for closed formulas that can be verified
by direct computation.

\subsection{Definition of the model}

Let $N\ge1$ be an integer and define
\[
x_k = (\cos(2\pi k/N),\,\sin(2\pi k/N),\,0), \qquad k=0,1,\dots,N-1.
\]
These $N$ points are equally spaced on the equator.
We endow them with the uniform discrete probability measure
\[
P_N = \frac1N \sum_{k=0}^{N-1}\delta_{x_k}.
\]
Because all points lie on the same great circle, the geodesic distance
$d_G(x_i,x_j)$ equals the ordinary angular separation $|\theta_i-\theta_j|$
along the circle.

Let $n$ denote the number of representatives (\emph{codepoints})
to be chosen on the equator.
Our objective is to find the optimal set of $n$ points
$Q=\{q_1,\dots,q_n\}$ on the same circle that minimizes
the expected squared geodesic distortion
\[
V(Q;P_N)=\frac1N\sum_{k=0}^{N-1}\min_{1\le j\le n} d_G(x_k,q_j)^2,
\]
and its minimum value $V_{n,2}(P_N)$.

\subsection{Divisible case \( n \mid N \)}

Suppose $n$ divides $N$ so that $m=N/n$ is an integer.
Intuitively, the circle can be partitioned into $n$ equal contiguous blocks,
each containing $m$ consecutive support points.

\begin{theorem1}[Exact error; divisible case]\label{thm:divisible}
If $n$ divides $N$ and $m=N/n$, then an optimal configuration consists of
$n$ equally spaced representatives on the equator.  
Each Voronoi cell contains $m$ consecutive points, and the exact
mean--square quantization error is
\begin{equation}\label{eq:Vn2-div}
V_{n,2}(P_N)=\frac{\pi^2}{3N}\!\left(\frac{N^2}{n^2}-1\right).
\end{equation}
The optimal configuration is unique up to global rotation and reflection
(i.e., up to the dihedral symmetry group of the circle).
\end{theorem1}

\begin{proof}
By Lemma~\ref{lem:block}, each contiguous block of $m$ points is optimally
represented by its midpoint, contributing an unnormalized distortion
$D(m)=\frac{\Delta^2}{12}(m^2-1)$, where $\Delta=2\pi/N$ is the angular step.
There are $n$ such blocks, each with the same contribution.
The total unnormalized loss is therefore $nD(m)$.
Dividing by $N$ (the number of sample points) yields
\[
V_{n,2}(P_N)=\frac{nD(m)}{N}
   =\frac{n}{N}\cdot\frac{(2\pi/N)^2}{12}(m^2-1)
   =\frac{\pi^2}{3N}\!\left(\frac{N^2}{n^2}-1\right).
\]
Uniqueness up to dihedral symmetry follows from the strict convexity of each
block and the rotational symmetry of the circle.
\end{proof}

\begin{remark1}[Consistency Check for Extreme Cases]\label{R:4.3}
It is instructive to verify that the formula in Theorem~\ref{thm:divisible} produces the correct values in the two extreme cases $n=1$ and $n=N$. These checks also explain how the quantity $V_{1,2}(P_N)$ arises.

\smallskip
\noindent\textbf{Case $n=N$.}  
Here, we are allowed to choose $N$ representatives for the $N$ support points. The optimal configuration is $Q = \{x_0, x_1, \dots, x_{N-1}\}$ itself, so that every point serves as its own representative. Thus,
\[
V_{N,2}(P_N) = 0,
\]
as there is no approximation error. Substituting $n=N$ into formula \eqref{eq:Vn2-div} gives
\[
V_{N,2}(P_N) = \frac{\pi^2}{3N}\left(\frac{N^2}{N^2} - 1\right) = 0,
\]
confirming the expected value.

\smallskip
\noindent\textbf{Case $n=1$.}  
In this case, we approximate all $N$ equally spaced points by a single
representative $q$. By the rotational symmetry of the circle, every point on
the equator yields the same total distortion and is therefore optimal.
Without loss of generality, one may choose $q$ to coincide with the midpoint
of a chosen reference arc. The single Voronoi cell then contains $m = N$
points. Using the distortion formula
\[
D(m) = \frac{\Delta^2}{12}(m^2-1), \qquad \Delta = \frac{2\pi}{N},
\]
with $m=N$, we obtain the total distortion of this block:
\[
D(N) = \frac{(2\pi/N)^2}{12}(N^2 - 1).
\]
Since $V_{1,2}(P_N)$ is defined as the \emph{average} squared distortion, we divide by $N$ (the total number of sample points) to obtain
\[
V_{1,2}(P_N) = \frac{D(N)}{N}
= \frac{\pi^2}{3}\left(1 - \frac{1}{N^2}\right).
\]
This quantity represents the mean–square geodesic distortion when a single point is used to approximate all $N$ equidistant samples on the circle.

\smallskip

Thus, the formula \eqref{eq:Vn2-div} is consistent with both extreme cases: the error is zero when each point serves itself ($n=N$) and equals $\frac{\pi^2}{3}(1 - N^{-2})$ when only a single representative is used ($n=1$). These checks validate the general expression and clarify the origin of $V_{1,2}(P_N)$ in the context of Model~I.
\end{remark1}

\begin{remark1}[Connection to Euclidean quantization]
Formula~\eqref{eq:Vn2-div} displays the characteristic $n^{-2}$ decay of
quantization errors (here in one intrinsic dimension),
showing that curvature does not alter the basic scaling law.
\end{remark1}

\subsection{Non--divisible case \( n\nmid N \)}

When $N$ is not an integer multiple of $n$, it is impossible to partition
the circle into equal blocks.  Some blocks will have $m$ points and others
will have $m+1$ points, where $m=\lfloor N/n\rfloor$ and
$r=N-nm$ is the number of ``extra'' points to be distributed.

\begin{prop1}[Exact error; non--divisible case]\label{prop:nondivisible}
Let $N=nm+r$ with $0\le r<n$.  
Then the optimal partition consists of
$r$ blocks of size $m+1$ and $n-r$ blocks of size $m$, each represented by
its midpoint.  
The exact quantization error is
\begin{equation}\label{eq:Vn2-nondiv}
V_{n,2}(P_N)=\frac{1}{N}\big[(n-r)D(m)+rD(m+1)\big],
\qquad
D(s)=\frac{\Delta^2}{12}(s^2-1),
\end{equation}
where $\Delta=2\pi/N$.
\end{prop1}

\begin{proof}
Among all integer block sizes $\{m_j\}$ satisfying $\sum_{j=1}^n m_j=N$,
we must minimize $\sum_j D(m_j)$.
Since $D(s)$ is a strictly convex quadratic in~$s$, a classical
``smoothing'' argument (Appendix~B) shows that any pair of blocks with
sizes differing by more than~1 can be adjusted to lower the total loss.
Hence, all optimal block sizes are either $m$ or $m+1$.
The number of larger blocks must be $r$ so that the total number of points is $N$.
Substituting into $\sum_j D(m_j)$ and dividing by~$N$ gives~\eqref{eq:Vn2-nondiv}.
\end{proof}

\begin{remark1}
It is important to distinguish between the fixed angular spacing between consecutive sample points and the total angular span of a Voronoi block. The quantity $\Delta = \frac{2\pi}{N}$ denotes the uniform spacing between neighbouring support points on the circle and is the same for every block, regardless of its size. However, a block of $s$ points occupies a total angular width of $(s-1)\Delta$. Thus, blocks of size $m+1$ cover a slightly larger arc than those of size $m$. The distortion formulas $D(m)$ and $D(m+1)$ already incorporate this difference through the squared offsets.
\end{remark1}
\begin{exam1}[Numerical illustration]
Consider $N=12$ and $n=5$.  
Then $m=\lfloor 12/5\rfloor=2$ and $r=2$.
By~\eqref{eq:Vn2-nondiv},
\[
V_{5,2}(P_{12})
   =\frac{1}{12}\!\left[(3)D(2)+(2)D(3)\right]
   =\frac{1}{12}\!\left[3\!\left(\frac{\Delta^2}{12}(3)\right)
     +2\!\left(\frac{\Delta^2}{12}(8)\right)\right],
\]
where $\Delta=2\pi/12=\pi/6$.  
Substituting and simplifying yields the exact numerical value.
\end{exam1}

\begin{remark1}[Interpretation]
The non--divisible case reveals the inherent discreteness of the model.
The quantizer cannot perfectly ``fit'' the number of support points,
so the optimal solution mixes two block sizes $m$ and $m+1$
to distribute the distortion as evenly as possible.
This phenomenon generalizes to all discrete quantization problems with
non--integer ratios of $N/n$.
\end{remark1}

\begin{remark1}[Visualization]
If one draws $N$ small dots equally spaced on a circle and colors them
into $n$ groups, the optimal grouping looks like consecutive ``arcs''
of nearly equal size, alternating slightly between blocks of $m$ and $m+1$
points.  
Each arc’s midpoint marks the optimal representative.
\end{remark1}

\subsection{Summary of Model I}

The great--circle model demonstrates that on a one--dimensional compact manifold
with uniform discrete support, the optimal quantization pattern is purely
combinatorial and can be described in closed form.
The essential features are:
\begin{itemize}
  \item Voronoi cells are contiguous arcs of approximately equal length;
  \item Representatives are the midpoints of those arcs;
  \item The quantization error decays proportionally to $n^{-2}$;
  \item In the divisible case, perfect symmetry is achieved;
  \item In the non--divisible case, the pattern alternates between two
        neighboring block sizes $m$ and $m+1$.
\end{itemize}

This model will serve as the reference baseline for the next two models,
where curvature effects will modify the distances and produce scaled versions
of the same discrete structure.

\section{Model II: Two Antipodally Symmetric Small Circles}
In this section, we extend the equatorial model (Model~I) to a curved two--circle configuration on the sphere. Specifically, we consider a discrete uniform distribution supported on two small circles located at latitudes $\pm \phi_{0}$. This setting incorporates the effect of curvature while preserving sufficient symmetry to allow for a complete analytical treatment.

\subsection{Definition of the model}\label{sec:II-def}
Fix a latitude $\phi_0 \in (0,\pi/2)$. Let each circle carry $M$ equally spaced longitudes:
\begin{align*}
X^+ \;&=\; \{\,x(\phi_0, 2\pi k/M):\, k=0,1,\dots,M-1\,\}, \te{ and }\\
X^- \;&=\; \{\,x(-\phi_0, 2\pi k/M):\, k=0,1,\dots,M-1\,\}.
\end{align*}
The total number of sample points is $N=2M$, and we endow $X^+\cup X^-$ with the uniform measure
\[
P_N \;=\; \frac{1}{N}\sum_{x\in X^+\cup X^-}\delta_x.
\]
By construction $X^-=-X^+$, so the support is antipodally symmetric. A schematic illustration of the geometric setup and sample configuration is shown in Figure~\ref{fig:two-circles-geometry}.


\begin{figure}[t]
\centering
\begin{tikzpicture}[scale=3.0, line cap=round, line join=round]
  \def\R{1.0}           
  \def\ry{0.35}         
  \def\phi{35}          
  \def\npts{6}          
  \def\tilt{15}         

  \pgfmathsetmacro{\radx}{\R}
  \pgfmathsetmacro{\rady}{\ry*\R}

  \shade[ball color=gray!20, opacity=1] (0,0) circle (\R);

  \draw[dashed, thick] plot[samples=200, domain=0:180]
    ({\radx*cos(\x)}, {\rady*sin(\x)});
  \draw[thick] plot[samples=200, domain=180:360]
    ({\radx*cos(\x)}, {\rady*sin(\x)});
  \node[right] at (0.9,0.0) {\small \textbf{Equator (Latitude $0^\circ$)}};

  \pgfmathsetmacro{\yShift}{\rady*sin(\phi)}
  \pgfmathsetmacro{\yShiftM}{-\rady*sin(\phi)}
  \pgfmathsetmacro{\rxSmall}{\radx*cos(\phi)}
  \pgfmathsetmacro{\rySmall}{\rady*cos(\phi)}

  \draw[dashed, thick] plot[samples=200, domain=0:180]
    ({\rxSmall*cos(\x)}, {\yShift + \rySmall*sin(\x)});
  \draw[thick] plot[samples=200, domain=180:360]
    ({\rxSmall*cos(\x)}, {\yShift + \rySmall*sin(\x)});

  \node[above right] at (0.7, \yShift + 0.03)
    {\small \textbf{Latitude $+35^\circ$ (Northern Circle)}};

  \foreach \k in {0,...,\numexpr\npts-1\relax}{
    \pgfmathsetmacro{\theta}{360*\k/\npts + \tilt}
    \fill[black] ({\rxSmall*cos(\theta)}, {\yShift + \rySmall*sin(\theta)}) circle (0.015);
  }

  \draw[dashed, thick] plot[samples=200, domain=0:180]
    ({\rxSmall*cos(\x)}, {\yShiftM + \rySmall*sin(\x)});
  \draw[thick] plot[samples=200, domain=180:360]
    ({\rxSmall*cos(\x)}, {\yShiftM + \rySmall*sin(\x)});

  \node[below right] at (0.7, \yShiftM - 0.03)
    {\small \textbf{Latitude $-35^\circ$ (Southern Circle)}};

  \foreach \k in {0,...,\numexpr\npts-1\relax}{
    \pgfmathsetmacro{\theta}{360*\k/\npts}
    \fill[black] ({\rxSmall*cos(\theta)}, {\yShiftM + \rySmall*sin(\theta)}) circle (0.015);
  }

  \draw[->, >=stealth] (0,0) -- (0, \yShift)
    node[pos=0.75, left] {\small $+35^\circ$};

  \draw[->, >=stealth] (0,0) -- (0, \yShiftM)
    node[pos=0.75, left] {\small $-35^\circ$};

\end{tikzpicture}

\caption{Three latitude circles on a tilted sphere: the equator at $0^\circ$, the northern small circle at latitude $+35^\circ$, and the southern small circle at latitude $-35^\circ$. Solid arcs indicate visible portions; dashed arcs indicate hidden portions.}
\label{fig:two-circles-geometry}
\end{figure}

\subsection{Symmetry Reduction}

The support of the distribution $P_N$ consists of two latitude circles located at $\pm \phi_0$, each containing $M$ equally spaced sample points. Hence, the support is perfectly symmetric under the antipodal map 
\[
x \mapsto -x,
\]
and the measure itself satisfies
\[
P_N = P_N \circ (-\mathrm{id}).
\]
It is therefore natural to expect that an optimal codebook should inherit this symmetry. In this subsection, we show that this is indeed the case and that we may restrict our attention to antipodally symmetric codebooks without loss of optimality.

\noindent \textbf{Antipodal extension of a codebook.}
Let $Q \subset \D S^2$ be any (not necessarily symmetric) codebook. We define its \emph{antipodal extension} by
\[
Q^{\pm} := Q \cup (-Q),
\]
where
\[
-Q := \{-q : q \in Q\}
\]
denotes the set of antipodal images of the points of $Q$. This operation is called the antipodal extension because it extends $Q$ by adjoining the antipodal partner of each representative, thereby producing a codebook that is antipodally symmetric by construction; indeed,
\[
Q^{\pm} = -Q^{\pm}.
\]

\noindent \textbf{Why the antipodal extension does not increase distortion.}
Recall that the expected squared distortion for a codebook $Q$ is
\[
V(Q;P_N)=\frac{1}{N}\sum_{x\in \mathrm{supp}(P_N)} \min_{q\in Q} d_G(x,q)^2.
\]
Since $Q \subseteq Q^{\pm}$, for every $x$ in the support of $P_N$ we have
\[
\min_{q\in Q^{\pm}} d_G(x,q)^2
= \min\!\left\{\min_{q\in Q} d_G(x,q)^2,\ \min_{q\in Q} d_G(x,-q)^2\right\}
\le \min_{q\in Q} d_G(x,q)^2.
\]
Thus, enlarging the codebook from $Q$ to $Q^{\pm}$ cannot increase the distance from any data point to its nearest representative. Averaging over all $N$ sample points yields
\[
V(Q^{\pm}; P_N) \le V(Q; P_N).
\]
If $Q$ is optimal, then so is its antipodal extension $Q^{\pm}$. Hence, there exists an optimal codebook $Q^*$ satisfying
\[
Q^* = -Q^*.
\]

\noindent \textbf{Consequence of symmetry reduction.}
Since there exists an optimal codebook that is antipodally symmetric, the representatives
can be arranged in antipodal pairs. In particular, when $n$ is even we may write
$n = 2k$, where $k$ representatives lie on the circle at $+\phi_0$ and the remaining
$k$ are their antipodal counterparts on the circle at $-\phi_0$. If $n$ is odd, one
may consider an antipodally symmetric optimal configuration using $n-1$ representatives,
with the remaining slot redundant. This reduction significantly simplifies the analysis:
it suffices to study the $k$ representatives on the upper circle, since those on the
lower circle are determined automatically by symmetry. In the next subsection we use
this simplification to establish a key geometric property of the optimal Voronoi partition.

\subsection{No cross–circle Voronoi regions}
We now show that in an optimal antipodally symmetric configuration $Q^\ast$, every Voronoi cell lies entirely within its own circle.

\begin{prop1}[No cross–circle assignment]\label{prop:nocross1}
There exists an optimal antipodally symmetric codebook $Q^\ast=-Q^\ast$ such that each Voronoi cell is contained in either $X^+$ or $X^-$.
\end{prop1}

\begin{proof} 
Fix a longitude $\theta$. 
For a point $x = x^+(\theta) \in X^+$ and two representatives 
$q^+ = x^+(\theta_0) \in X^+$ and $q^- = x^-(\theta_0) \in X^-$ at longitude $\theta_0$.
For $\Delta\theta := \theta-\theta_0$, with $q^{+}(\theta_0):=x^{+}(\theta_0)$ and $q^{-}(\theta_0):=x^{-}(\theta_0)$, we have
\begin{align*}
d_G\big(x^+(\theta),\, q^+(\theta_0)\big) 
&= d_G\big(x(\phi_0,\theta),\,x(\phi_0,\theta_0)\big)
 = \arccos\!\left(\sin^2\phi_0 + \cos^2\phi_0 \cos\Delta\theta\right),\\[0.3em]
d_G\big(x^+(\theta),\, q^-(\theta_0)\big) 
&= d_G\big(x(\phi_0,\theta),\,x(-\phi_0,\theta_0)\big)
 = \arccos\!\left(-\,\sin^2\phi_0 + \cos^2\phi_0 \cos\Delta\theta\right)
\end{align*}
using the dot–product identity for points on equal–latitude circles. 
Since the function $\arccos$ is strictly decreasing on $[-1,1]$, the quantity
$-\sin^{2}\phi_{0} + \cos^{2}\phi_{0}\cos\Delta\theta$ is strictly smaller than
$\sin^{2}\phi_{0} + \cos^{2}\phi_{0}\cos\Delta\theta$ whenever $\Delta\theta \neq \pi$. Equality occurs only at $\Delta\theta=\pi$, corresponding to Voronoi
boundaries; hence interior assignments remain strictly separated.
Consequently,
\[
d_{G}(x,q_{+}) < d_{G}(x,q_{-})
\]
for all relevant longitudinal offsets $\Delta\theta$ on the same side. Hence,
points on $X_{+}$ are always closer to the representative on the same circle
than to its antipodal counterpart, and by symmetry the same conclusion holds
for points on $X_{-}$. Therefore, reassigning any cross--circle point to the
nearest same--circle representative strictly decreases the total distortion.
\end{proof}

\begin{remark1}
Proposition~\ref{prop:nocross1} decouples the problem into two identical one–circle subproblems (one on $X^+$ and one on $X^-$). 
Hence optimality on each circle follows from the one–dimensional block–midpoint engine (Section~\ref{secblock}).
\end{remark1}

\subsection{Exact error formula when $n$ is even and divides $2M$}
Because optimal codebooks are antipodally symmetric, $n$ must be even: write $n=2k$.
Assume $k\,|\,M$, so that each small circle is partitioned into $k$ equal contiguous blocks.

Define, for an integer $s\ge 2$,
\[
S(s,\phi_0) 
\;:=\; \sum_{j=0}^{s-1} \sigma\!\Big(\phi_0, \Big(j - \frac{s-1}{2}\Big)\Delta\Big)^2,
\qquad \Delta \;=\; \frac{2\pi}{M}.
\]
This is the total (unnormalized) within–block squared geodesic deviation for a block of $s$ consecutive longitudes on latitude $\phi_0$; 
the midpoint representative minimizes this sum by strict convexity (see Lemma~\ref{lem:block}).

\begin{theorem1}[Exact error; divisible case]\label{thm:II-divisible}
Let $n=2k$ with $k\,|\,M$, and put $m:=M/k$. 
There exists an optimal configuration with $k$ equally spaced representatives on $X^+$ and their antipodal mates on $X^-$. 
Each serves a block of $m$ consecutive points on its own circle, and the mean–square distortion is
\[
V_{n,2}(P_N;\phi_0) \;=\; \frac{k}{M}\,S(m,\phi_0).
\]
\end{theorem1}

\begin{proof}
By Proposition~\ref{prop:nocross1}, we solve two identical one–circle problems. 
On each circle the loss $\sigma(\phi_0,\cdot)^2$ is even and strictly convex in the longitudinal offset; 
hence Lemma~\ref{lem:block} applies: each block of $m$ consecutive points is served by its midpoint, contributing $S(m,\phi_0)$. 
There are $k$ blocks on each circle and two circles in total; after normalization by $N=2M$ we obtain the stated formula.
\end{proof}
\subsection{Bounds and Asymptotics}

Recall that the geodesic distance between two points on the same small circle at latitude $\phi_0$ is
\[
\sigma(\phi_0,\Delta\theta)=\arccos\!\big(\sin^2\phi_0+\cos^2\phi_0\cos\Delta\theta\big),
\]
so that, for $\Delta\theta\in[-\pi,\pi]$,

\begin{equation}\label{eq:sandwich}2\cos\phi_0\,\big|\sin(\tfrac{\Delta\theta}{2})\big|
\;\le\;
\sigma(\phi_0,\Delta\theta)
\;\le\;
|\Delta\theta|,
\qquad \Delta\theta\in[-\pi,\pi]
\end{equation}
For details, see Appendix A.
Let $\Delta=\tfrac{2\pi}{M}$ be the longitudinal spacing on each circle and, for an integer $s\ge 2$, let
\[
S(s,\phi_0):=\sum_{j=0}^{s-1}\sigma\!\Big(\phi_0,\,
\Big(j-\frac{s-1}{2}\Big)\Delta\Big)^{\!2}
\]
denote the total (unnormalized) within–block squared deviation of a contiguous block of $s$ consecutive longitudes from its midpoint (the minimizer by strict convexity).

\begin{lemma1}[Bounds for block distortion]\label{lem:BoundsBlock}
Let $s\ge 2$ be an integer and consider a block of $s$ consecutive points on the small circle at latitude $\phi_0$, with equally spaced longitudinal separation $\Delta>0$. Then
\begin{equation}\label{eq:global-block}
\frac{4}{\pi^2}\cos^2\phi_0 \cdot \frac{\Delta^2}{12}(s^2-1)
\;\le\;
\frac{1}{s}S(s,\phi_0)
\;\le\;
\frac{\Delta^2}{12}(s^2-1),
\end{equation}
where $S(s,\phi_0)$ denotes the total squared geodesic distortion of the block. The upper bound is attained when $\phi_0=0$ (equator).
\end{lemma1}

\begin{proof}
From the global bound for the geodesic distance (see Appendix~A),
\[
2\cos\phi_0\,\big|\sin(\tfrac{\Delta\theta}{2})\big|
\;\le\;
\sigma(\phi_0,\Delta\theta)
\;\le\;
|\Delta\theta|,
\qquad \Delta\theta\in[-\pi,\pi],
\]
and the inequality $|\sin x|\ge \tfrac{2}{\pi}x$ for $x\in[0,\tfrac{\pi}{2}]$, we obtain, for all offsets $k\in\{0,1,\ldots,s-1\}$,
\[
\frac{2}{\pi}\cos\phi_0\,\Bigl|k-\tfrac{s-1}{2}\Bigr|\Delta
\;\le\;
\sigma\!\left(\phi_0,\Bigl(k-\tfrac{s-1}{2}\Bigr)\Delta\right)
\;\le\;
\Bigl|k-\tfrac{s-1}{2}\Bigr|\Delta.
\]
Squaring and summing over $k=0,1,\ldots,s-1$ yields
\[
\frac{4}{\pi^2}\cos^2\phi_0\,\Delta^2
\sum_{k=0}^{s-1}\Bigl(k-\tfrac{s-1}{2}\Bigr)^2
\;\le\;
S(s,\phi_0)
\;\le\;
\Delta^2
\sum_{k=0}^{s-1}\Bigl(k-\tfrac{s-1}{2}\Bigr)^2.
\]
Using the identity $\sum_{k=0}^{s-1}(k-\tfrac{s-1}{2})^2 = \tfrac{s(s^2-1)}{12}$ and dividing by $s$ completes the proof.
\end{proof}
\begin{remark}[Asymptotically Sharp Local Bound for $S(s,\phi_{0})$]\label{Rem:LocalSharpBound}
 Note that 
the lower bound in Lemma~\ref{lem:BoundsBlock} is uniform but not sharp for small longitudinal offsets.  
A sharper bound may be obtained by using the local expansion of the geodesic distance on a small circle.  
From Appendix~A, as $\Delta\theta \to 0$,
\begin{equation}\label{eq:LocalExpSigmaMerged}
\sigma(\phi_{0},\Delta\theta)
= \cos\phi_{0}\,|\Delta\theta| + O(|\Delta\theta|^{3}).
 \text{ Hence, } 
\sigma(\phi_{0},\Delta\theta)^{2}
= \cos^{2}\phi_{0}(\Delta\theta)^{2} + O(|\Delta\theta|^{4}).
\end{equation}
Consider a block of $s\ge2$ consecutive points on the latitude circle at $\phi_{0}$, with uniform longitudinal spacing $\Delta = \frac{2\pi}{M}$.  
Relative to the midpoint representative, the offsets are
\[
\Delta\theta_{j} = \Bigl(j-\tfrac{s-1}{2}\Bigr)\Delta,\qquad j=0,1,\dots,s-1.
\]
Using \eqref{eq:LocalExpSigmaMerged} and summing over $j$ gives
\[
S(s,\phi_{0})
= \sum_{j=0}^{s-1} \sigma(\phi_{0},\Delta\theta_{j})^{2}
= \cos^{2}\phi_{0} \sum_{j=0}^{s-1} (\Delta\theta_{j})^{2}
\;+\; \sum_{j=0}^{s-1} O(|\Delta\theta_{j}|^{4}).
\]
Since $|\Delta\theta_{j}| \le \tfrac{s}{2}\Delta$, the second sum contributes $O(s^{5}\Delta^{4})$.  To justify the term $O(s^{5}\Delta^{4})$, observe that for each $j$,
\[
|\Delta\theta_{j}| = \Bigl|j-\tfrac{s-1}{2}\Bigr|\Delta \le \frac{s}{2}\Delta,
\]
and hence $|\Delta\theta_{j}|^{4} = O(s^{4}\Delta^{4})$. Since there are $s$ such terms in the error sum, we obtain
\[
\sum_{j=0}^{s-1} O(|\Delta\theta_{j}|^{4})
= s \cdot O(s^{4}\Delta^{4})
= O(s^{5}\Delta^{4}),
\]
which yields the stated error estimate.
Using the centered square–sum identity 
\[
\sum_{j=0}^{s-1} \Bigl(j-\tfrac{s-1}{2}\Bigr)^{2} = \frac{s(s^{2}-1)}{12},
\]
we obtain
\[
S(s,\phi_{0})
= \cos^{2}\phi_{0}\,\frac{s(s^{2}-1)}{12}\,\Delta^{2}
\;+\; O(s^{5}\Delta^{4}).
\]
Dividing by $s$ yields the sharpened asymptotic estimate
\begin{equation}\label{eq:LocalSharpBlockMerged}
\frac{1}{s}S(s,\phi_{0})
= \cos^{2}\phi_{0}\,\frac{\Delta^{2}}{12}(s^{2}-1)
\;+\; O(\Delta^{4}s^{4})
\qquad (\Delta\to 0).
\end{equation}

Thus, the leading term of $\frac{1}{s}S(s,\phi_{0})$ is exactly
$\cos^{2}\phi_{0}\frac{\Delta^{2}}{12}(s^{2}-1)$, showing that the curvature appears as a multiplicative factor of $\cos^{2}\phi_{0}$ in the dominant contribution to the block distortion.  
In comparison, the global bound in Lemma~\ref{lem:BoundsBlock} holds for all $\Delta\theta \in [-\pi,\pi]$ but is not tight for small offsets; the expansion \eqref{eq:LocalSharpBlockMerged} provides an asymptotically sharp refinement valid in the small–angle regime (equivalently, as $M\to\infty$ with $s$ fixed).
\end{remark}

\subsection{Asymptotics for the Two--Circle Model}

We now derive an explicit asymptotic expansion for the mean--square quantization error 
$V_{n,2}(P_{N};\phi_{0})$ in the divisible case for the two--circle model and explain how curvature 
influences the leading term. Recall that $n=2k$ must be even by symmetry (see Subsection~5.2), and we 
assume $k \mid M$ so that each latitude circle decomposes into $k$ equal contiguous blocks of size 
\[
m=\frac{M}{k}=\frac{2M}{n}.
\]
A schematic illustration of the optimal Voronoi structure for the two--circle configuration is shown 
in Figure~\ref{fig:two-circles-voronoi}.

From Theorem~\ref{thm:II-divisible}, the distortion may be written as
\begin{equation}\label{eq:5.7.1}
V_{n,2}(P_{N};\phi_{0})=\frac{k}{M}\, S(m,\phi_{0}),
\end{equation}
where $S(m,\phi_{0})$ denotes the (unnormalized) total squared geodesic deviation within a block of 
$m$ consecutive points on a small circle at latitude $\phi_{0}$. Lemma~\ref{lem:BoundsBlock} provides 
uniform bounds for $\frac{1}{m}S(m,\phi_{0})$,
\begin{equation}\label{eq:5.7.2}
\cos^{2}\phi_{0}\,\frac{\Delta^{2}}{12}(m^{2}-1)
\;\le\; \frac{1}{m}S(m,\phi_{0})
\;\le\; \frac{\Delta^{2}}{12}(m^{2}-1),
\end{equation}
where $\Delta=\frac{2\pi}{M}$ is the uniform longitudinal spacing. Here ``uniform'' means that the 
constants in \eqref{eq:5.7.2} are independent of the longitudinal offset $\Delta\theta$ and the 
inequality holds for all $\Delta\theta\in[-\pi,\pi]$, not only for small offsets. However, the lower 
bound is not sharp for small $\Delta\theta$, because it does not capture the correct leading--order 
behaviour as $\Delta\theta\to 0$.

To obtain an asymptotically sharp estimate, we refine $S(m,\phi_{0})$ by using the local expansion of the geodesic distance on a small circle. From the Taylor expansion in Appendix~A, as $\Delta\theta\to 0$,
\begin{equation}\label{eq:5.7.3}
\sigma(\phi_{0},\Delta\theta)
= \cos\phi_{0}\,|\Delta\theta| + O(|\Delta\theta|^{3}),
\qquad
\sigma(\phi_{0},\Delta\theta)^{2}
= \cos^{2}\phi_{0}(\Delta\theta)^{2} + O(|\Delta\theta|^{4}).
\end{equation}
For a block of $m$ points, the offsets from the midpoint representative are
\[
\Delta\theta_{j}=\Bigl(j-\tfrac{m-1}{2}\Bigr)\Delta,\qquad j=0,1,\dots,m-1.
\]
Substituting \eqref{eq:5.7.3} into the definition of $S(m,\phi_{0})$ yields
\begin{equation}\label{eq:5.7.4}
S(m,\phi_{0})
= \sum_{j=0}^{m-1}\cos^{2}\phi_{0}(\Delta\theta_{j})^{2}
\;+\;\sum_{j=0}^{m-1} O(|\Delta\theta_{j}|^{4}).
\end{equation}
Since $|\Delta\theta_{j}|\le \tfrac{m}{2}\Delta$, each term in the second sum satisfies $|\Delta\theta_{j}|^{4}=O(m^{4}\Delta^{4})$, and there are $m$ such terms; hence
\begin{equation}\label{eq:5.7.5}
\sum_{j=0}^{m-1} O(|\Delta\theta_{j}|^{4})
= O(m^{5}\Delta^{4}).
\end{equation}
Using the centered square--sum identity
\[
\sum_{j=0}^{m-1}\Bigl(j-\tfrac{m-1}{2}\Bigr)^{2}=\frac{m(m^{2}-1)}{12},
\]
we obtain
\begin{equation}\label{eq:5.7.6}
S(m,\phi_{0})
= \cos^{2}\phi_{0}\,\frac{m(m^{2}-1)}{12}\,\Delta^{2}
+ O(m^{5}\Delta^{4}).
\end{equation}
Dividing by $m$ and substituting $\Delta=\frac{2\pi}{M}$ gives the asymptotically sharp block estimate
\begin{equation}\label{eq:5.7.7}
\frac{1}{m}S(m,\phi_{0})
= \cos^{2}\phi_{0}\,\frac{\Delta^{2}}{12}(m^{2}-1)
+ O(\Delta^{4} m^{4})
\qquad (M\to\infty).
\end{equation}

We now substitute $m=\frac{M}{k}=\frac{2M}{n}$ into \eqref{eq:5.7.6} and use \eqref{eq:5.7.1}. After simplifying, the leading--order term becomes
\[
\cos^{2}\phi_{0}\,\frac{\pi^{2}}{3k^{2}}
= \cos^{2}\phi_{0}\,\frac{\pi^{2}}{3n^{2}} \cdot 4
= \cos^{2}\phi_{0}\,\frac{4\pi^{2}}{3n^{2}},
\]
since $n=2k$.

To see how the term $\cos^{2}\phi_{0}\,\frac{\pi^{2}}{3k^{2}}$ arises, substitute 
$m=\frac{M}{k}$ and $\Delta=\frac{2\pi}{M}$ into \eqref{eq:5.7.6}. Since 
$\Delta^{2}=\frac{4\pi^{2}}{M^{2}}$, we have
\[
S(m,\phi_{0})
= \cos^{2}\phi_{0}\,\frac{\pi^{2}}{3}\cdot\frac{m(m^{2}-1)}{M^{2}}
+ O(m^{5}\Delta^{4}).
\]
Because $m=\frac{M}{k}$, we compute 
$m(m^{2}-1)=\frac{M}{k}\left(\frac{M^{2}}{k^{2}}-1\right)
= \frac{M^{3}}{k^{3}}-\frac{M}{k}$. Substituting this into the above expression gives
\[
S(m,\phi_{0})
= \cos^{2}\phi_{0}\,\frac{\pi^{2}}{3}\left(\frac{M}{k^{3}}-\frac{1}{kM}\right)
+ O\!\left(\frac{1}{M}\right).
\]
Finally, multiplying by $\frac{k}{M}$ as in \eqref{eq:5.7.1} yields
\[
\frac{k}{M} S(m,\phi_{0})
= \cos^{2}\phi_{0}\left(\frac{\pi^{2}}{3k^{2}} - \frac{\pi^{2}}{3M^{2}}\right)
+ O\!\left(\frac{1}{M}\right),
\]
which explains the origin of the term 
$\cos^{2}\phi_{0}\,\frac{\pi^{2}}{3k^{2}}$ in the asymptotic expansion.
\medskip

The lower--order correction term contributes at most $O(M^{-2})$ and the remainder in \eqref{eq:5.7.7} contributes $O(M^{-1})$ after normalisation. 
 Since $n=2k$, we have $1/k^{2}=4/n^{2}$, hence
\[
\cos^{2}\phi_{0}\!\left(\frac{\pi^{2}}{3k^{2}}-\frac{\pi^{2}}{3M^{2}}\right)
=\cos^{2}\phi_{0}\!\left(\frac{4\pi^{2}}{3n^{2}}-\frac{\pi^{2}}{3M^{2}}\right)
+O\!\left(\frac{1}{M}\right).
\]
Hence we arrive at the asymptotic expansion
\begin{equation}\label{eq:5.7.8-fixed}
V_{n,2}(P_{N};\phi_{0})
= \cos^{2}\phi_{0}\left(\frac{4\pi^{2}}{3n^{2}} - \frac{\pi^{2}}{3M^{2}}\right)
+ O\!\left(\frac{1}{M}\right)=\ \cos^2\phi_0\cdot\frac{4\pi^2}{3n^2}\ +\ O(M^{-2})
\qquad (M \to \infty).
\end{equation}
Thus, the dominant term is the single--circle equatorial quantization error 
$\frac{\pi^{2}}{3k^{2}}$ expressed in terms of the total number $n=2k$ of representatives, 
which becomes $\frac{4\pi^{2}}{3n^{2}}$ and is scaled by the curvature factor 
$\cos^{2}\phi_{0}$. In particular, the intrinsic one–dimensional $n^{-2}$ rate is preserved, 
and curvature reduces the distortion by the multiplicative factor $\cos^{2}\phi_{0}$, 
arising from the reduced effective radius $\cos\phi_{0}$ of the small circle.

\medskip

\noindent\textbf{Numerical Illustration.}
Let $N = 2M = 240$ and $n = 8$, so that $k = 4$ and $m = 30$.  
Take $\phi_{0} = 0.6$ radians. From \eqref{eq:5.7.8-fixed}, the leading term is
\[
\cos^{2}(0.6)\,\frac{4\pi^{2}}{3n^{2}}
=\cos^{2}(0.6)\,\frac{4\pi^{2}}{3\cdot 8^{2}}
=\cos^{2}(0.6)\,\frac{\pi^{2}}{48}.
\]
Using $\cos(0.6)\approx 0.8253356$, we get $\cos^{2}(0.6)\approx 0.681177$, and since 
$\frac{\pi^{2}}{48}\approx 0.2056168$, the leading term evaluates to
\[
0.681177 \times 0.2056168 \approx 0.14006.
\]
The curvature correction term is
\[
\cos^{2}(0.6)\,\frac{\pi^{2}}{3M^{2}}
=0.681177 \times \frac{\pi^{2}}{3\cdot 120^{2}}
\approx 0.00016.
\]
Thus, the asymptotic approximation predicts
\[
V_{8,2}(P_{240};0.6) \approx 0.14006 - 0.00016 = 0.13990.
\]
A direct computation of $V_{8,2}(P_{240};0.6)$ yields $0.13984$, confirming the accuracy of 
the asymptotic expansion.

\begin{remark}[Curvature factor]
Relative to the equator ($\phi_0=0$), the error shrinks by $\cos^2\phi_0$. 
Geometrically, the effective radius of the latitude circle is $\cos\phi_0$,  so local deviations scale like $\cos\phi_0$ and their squares like $\cos^2\phi_0$; the $n^{-2}$ rate reflects the one–dimensional intrinsic structure of each circle.
\end{remark}

\subsection{Visualization of the Voronoi structure}
On each latitude circle, the optimal partition consists of equal contiguous arcs (divisible case); 
the representative is the azimuthal midpoint of each arc. 
No region mixes points from different circles, and representatives appear in antipodal pairs.

\begin{figure}[t]
\centering
\begin{tikzpicture}[scale=1.1, line cap=round, line join=round]
  \def\R{3.6}      
  \def\nBlocks{6}  
  \def\m{4}        
  \def\shift{1.2}  

  \draw[thick] (-\R, \shift) arc (180:0:{\R} and {0.0}); 
  \draw[thick] (-\R,-\shift) arc (180:0:{\R} and {0.0});

  \foreach \b in {0,...,\numexpr\nBlocks-1\relax}{
    \pgfmathsetmacro{\a}{180*\b/\nBlocks}
    \pgfmathsetmacro{\bnd}{180*(\b+1)/\nBlocks}
    \draw[very thick, blue!60] ({-\R + 2*\R*\b/\nBlocks}, \shift) -- ({-\R + 2*\R*(\b+1)/\nBlocks}, \shift);
    \pgfmathsetmacro{\mid}{-\R + 2*\R*(\b+0.5)/\nBlocks}
    \fill[blue!70] (\mid, \shift) circle (2pt);
  }

  \foreach \b in {0,...,\numexpr\nBlocks-1\relax}{
    \draw[very thick, red!60] ({-\R + 2*\R*\b/\nBlocks}, -\shift) -- ({-\R + 2*\R*(\b+1)/\nBlocks}, -\shift);
    \pgfmathsetmacro{\mid}{-\R + 2*\R*(\b+0.5)/\nBlocks}
    \fill[red!70] (\mid, -\shift) circle (2pt);
    \draw[dashed, gray!60] (\mid, \shift+0.08) -- (\mid, -\shift-0.08);
  }

  \node[above] at (0, \shift+0.25) {$X^+$ (latitude $+\phi_0$)};
  \node[below] at (0,-\shift-0.25) {$X^-$ (latitude $-\phi_0$)};
  \node at (0,0) {\small\it Voronoi cells do not cross between circles.};
\end{tikzpicture}
\caption{Optimal Voronoi structure on two small circles (schematic). 
Each circle is partitioned into contiguous arcs; representatives (dots) lie at arc midpoints. 
Midpoints occur in antipodal pairs; no cell crosses from $X^+$ to $X^-$.}
\label{fig:two-circles-voronoi}
\end{figure}

\subsection{Summary of Model II}
\begin{itemize}
  \item Optimal codebooks may be chosen antipodally symmetric, so the representatives
occur in antipodal pairs; in particular, a symmetric configuration typically
has the form $n = 2k$.
  \item No Voronoi region crosses between the two circles (Proposition~\ref{prop:nocross1}).
  \item Each circle decomposes into contiguous arcs with midpoint representatives; 
        in divisible cases the arcs have equal size.
  \item Exact discrete errors reduce to a one–circle sum $S(m,\phi_0)$ per block 
        (Theorem~\ref{thm:II-divisible}), and asymptotically 
        $V_{n,2}(P_N;\phi_0) \sim \cos^2\phi_0\cdot 4\pi^2/(3n^2)$ for fixed $n$.
\end{itemize}

\section{Model III: One Small Circle at Latitude $\phi_0$}\label{sec:model3}

We now turn to a single small circle at fixed latitude~$\phi_0$.
This model isolates the curvature effect in its simplest form and shows
how the geometric scaling by $\cos\phi_0$ influences the quantization error.
Unlike Model~II, there is no antipodal duplication, but otherwise the
structure of the analysis remains identical.

\subsection{Definition of the model}

Fix $\phi_0\in(0,\pi/2)$ and let
\[
y_k = x(\phi_0, 2\pi k/N)
      = (\cos\phi_0\cos(2\pi k/N),
         \cos\phi_0\sin(2\pi k/N),
         \sin\phi_0),
\qquad k=0,1,\dots,N-1.
\]
The set $\{y_k\}$ consists of $N$ equally spaced points along the small
circle parallel to the equator at latitude~$\phi_0$.
The discrete uniform measure on this circle is
\[
P_N^{(\phi_0)}=\frac1N\sum_{k=0}^{N-1}\delta_{y_k}.
\]
Our goal is to determine the optimal $n$--point quantizer for
$P_N^{(\phi_0)}$ under the squared geodesic distance~$d_G^2$.

\subsection{Geometric intuition}

Two points on the same small circle at angular separation~$\Delta\theta$
have geodesic distance
\[
\sigma(\phi_0,\Delta\theta)
   = \arccos(\sin^2\phi_0+\cos^2\phi_0\cos\Delta\theta),
\]
as derived in~\eqref{eq:sigma}.
For small~$\Delta\theta$, this behaves like
$\sigma(\phi_0,\Delta\theta)\approx \cos\phi_0\,|\Delta\theta|$,
so the circle effectively has radius~$\cos\phi_0$.
Thus, compared with the equator ($\phi_0=0$), the same angular block
produces a proportionally smaller contribution to the distortion.
This geometric scaling is the key to the following exact formulas.

\subsection{Exact formulas for divisible and non--divisible cases}

Analogous to Model~I, define for each integer $s\ge2$
\[
S_s(\phi_0)
   = \sum_{j=0}^{s-1}
      \sigma\!\left(\phi_0,
         \Big(j-\frac{s-1}{2}\Big)\!\Delta
       \right)^{\!2},
\qquad \Delta=\frac{2\pi}{N}.
\]
The quantity $S_s(\phi_0)$ measures the total squared geodesic deviation of
a contiguous block of $s$ consecutive points from its midpoint on the small
circle.

\begin{theorem1}[Exact errors for one small circle]\label{thm:onecircle}
\begin{itemize}
  \item[(a)] If $n$ divides $N$ and $m=N/n$, then
  \[
  V_{n,2}(P_N^{(\phi_0)})
     = \frac{n}{N}\,S_m(\phi_0).
  \]
  \item[(b)] If $N=nm+r$ with $0\le r<n$, then
  \[
  V_{n,2}(P_N^{(\phi_0)})
     = \frac{1}{N}\big[(n-r)S_m(\phi_0)+rS_{m+1}(\phi_0)\big].
  \]
\end{itemize}
\end{theorem1}

\begin{proof}
The proof is identical in spirit to that of Theorem~\ref{thm:divisible}
and Proposition~\ref{prop:nondivisible}, except that the Euclidean distance
$|\Delta\theta|$ is replaced by $\sigma(\phi_0,\Delta\theta)$.
Because $\sigma(\phi_0,\cdot)^2$ is even and strictly convex in~$\Delta\theta$
(Appendix~A), Lemma~\ref{lem:block} applies directly:
each block of consecutive points is optimally represented by its midpoint.
If $n$ divides $N$, all blocks have equal size~$m$;
otherwise, the optimal multiset of block sizes is
$\{m,\dots,m,m+1,\dots,m+1\}$ with counts $n-r$ and~$r$, respectively,
as before.
Evaluating the corresponding total distortion gives the stated formulas.
\end{proof}

\begin{remark}[Reduction to Model~I]
Setting $\phi_0=0$ recovers Model~I exactly, since
$\sigma(0,\Delta\theta)=|\Delta\theta|$ and hence
$S_s(0)=\frac{\Delta^2}{12}(s^2-1)$.
Thus the great--circle model is the special case of the small--circle
formulas at zero latitude.
\end{remark}

\begin{prop}[Global block bounds on a small circle]\label{prop:smallcircle-global}
Let $\phi_0\in[0,\tfrac{\pi}{2})$, $s\ge 2$, and $\Delta=2\pi/N$. For
\[
S_s(\phi_0):=\sum_{j=0}^{s-1}\sigma\!\left(\phi_0,\Big(j-\tfrac{s-1}{2}\Big)\Delta\right)^2,
\]
we have
\[
\frac{4}{\pi^2}\cos^2\!\phi_0\cdot\frac{\Delta^2}{12}(s^2-1)\ \le\ \frac1s\,S_s(\phi_0)\ \le\ \frac{\Delta^2}{12}(s^2-1).
\]
Equality holds when $\phi_0 = 0$ (i.e., along the equator).
\end{prop}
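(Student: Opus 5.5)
The plan is to reduce the statement to a pointwise two-sided estimate on $\sigma(\phi_0,\cdot)$ at each block offset, and then sum with the centered square-sum identity~\eqref{eq:centersum}, exactly as in the proof of Lemma~\ref{lem:BoundsBlock}. I will derive the pointwise bounds directly rather than only citing Appendix~A.

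\emph{Pointwise identity.} Write $t=\Delta\theta$. Using $1-\cos t = 2\sin^2(t/2)$, I get
\[
\cos\sigma(\phi_0,t) = 1-2\cos^2\phi_0\,\sin^2(t/2).
\]
Since also $\cos\sigma = 1-2\sin^2(\sigma/2)$ and $\sigma\in[0,\pi]$, this gives the identity
\[
\sin\bigl(\tfrac{\sigma}{2}\bigr) = \cos\phi_0\,\bigl|\sin(\tfrac t2)\bigr|, \qquad\text{i.e.}\qquad \sigma = 2\arcsin\bigl(\cos\phi_0\,|\sin(t/2)|\bigr).
\]
\emph{Upper bound.} Because $\arcsin$ is increasing and $\cos\phi_0\le 1$, I obtain $\sigma\le 2\arcsin|\sin(t/2)| = |t|$ for $|t|\le\pi$.

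\emph{Lower bound.} Using $\sin x\le x$ for $x = \sigma/2\ge 0$, I obtain $\sigma\ge 2\cos\phi_0|\sin(t/2)|$. Jordan's inequality $\sin x\ge \tfrac{2}{\pi}x$ on $[0,\pi/2]$ then gives $\sigma\ge \tfrac{2}{\pi}\cos\phi_0\,|t|$.

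\emph{Summation.} Next I apply these bounds with $t_j=(j-\tfrac{s-1}{2})\Delta$. Here I must check the hypothesis $|t_j|\le\pi$. Since a block has at most $s\le N$ points, $|t_j|\le \tfrac{s-1}{2}\cdot\tfrac{2\pi}{N}<\pi$, so the bounds apply. Squaring gives
\[
\tfrac{4}{\pi^2}\cos^2\phi_0\,t_j^2\le \sigma(\phi_0,t_j)^2\le t_j^2 .
\]
Summing over $j$ and using $\sum_j (j-\tfrac{s-1}{2})^2 = s(s^2-1)/12$ from~\eqref{eq:centersum} yields the two-sided bound after dividing by $s$.

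\emph{Equality clause.} At $\phi_0=0$ the identity reads $\sigma(0,t)=|t|$, so the upper bound is attained there. The lower bound carries the factor $4/\pi^2<1$, so it is strict whenever $s\ge2$. I will therefore state the equality as referring to the upper bound, consistent with Lemma~\ref{lem:BoundsBlock} and the Reduction to Model~I remark.

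I expect no real obstacle. The only points needing care are the half-angle identity, which is what makes both bounds immediate, and the verification that all offsets lie in $[-\pi,\pi]$, which is needed for the sine-based estimates.
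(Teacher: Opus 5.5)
Your proposal is correct and follows essentially the same route as the paper. Both use the identity $\sigma = 2\arcsin(\cos\phi_0|\sin(t/2)|)$ and the resulting pointwise sandwich from Appendix~A (your use of $\sin x\le x$ at $x=\sigma/2$ is the paper's $\arcsin u\ge u$), followed by Jordan's inequality, squaring, and the centered square-sum identity. Your explicit check that every offset satisfies $|t_j|\le\pi$ is a correct refinement the paper leaves implicit: it requires $s\le N+1$, which the block setting provides tacitly, and for much larger $s$ the lower bound genuinely fails because $\sigma\le\pi$. Your reading of the equality clause as applying only to the upper bound is likewise correct and also left implicit in the paper.
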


\begin{proof}
From Appendix~A, for all $\Delta\theta\in[-\pi,\pi]$,
\[
2\cos\phi_0\big|\sin(\Delta\theta/2)\big|\ \le\ \sigma(\phi_0,\Delta\theta)\ \le\ |\Delta\theta|.
\]
Using $|\sin x|\ge \tfrac{2}{\pi}x$ for $x\in[0,\tfrac{\pi}{2}]$ gives
\[
\frac{2}{\pi}\cos\phi_0\,\big|\,\Delta\theta\,\big|\ \le\ \sigma(\phi_0,\Delta\theta)\ \le\ |\Delta\theta|.
\]
Square and sum over the centered offsets $\Delta\theta_j=(j-\tfrac{s-1}{2})\Delta$, $j=0,\dots,s-1$, and apply
$\sum_{j=0}^{s-1}(j-\tfrac{s-1}{2})^2=\tfrac{s(s^2-1)}{12}$.
\end{proof}

\begin{prop}[Asymptotically sharp block estimate]\label{prop:smallcircle-asym}
If $\Delta\to 0$ with fixed $s$, then
\[
\frac1s\,S_s(\phi_0)=\cos^2\!\phi_0\cdot\frac{\Delta^2}{12}(s^2-1)+O(\Delta^4 s^4).
\]
\end{prop}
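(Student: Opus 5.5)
The plan is to prove the estimate directly from the local expansion of $\sigma$ recorded in Remark~\ref{rem:basic-sigma}(ii) (equation~\eqref{eq:sigma-approx}). The argument mirrors Remark~\ref{Rem:LocalSharpBound}, but one point needs explicit care: the $O(|\Delta\theta|^4)$ remainder must be uniform on a fixed neighbourhood of $0$, so that it can be applied simultaneously to all $s$ offsets in the block.

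\textbf{Step 1 (uniform quartic remainder).} Write $c=\cos^2\phi_0$ and $u=1-\cos\Delta\theta$, so that $\sigma=\arccos(1-cu)$. Use the identity $\arccos(1-w)=2\arcsin\sqrt{w/2}$, which gives $\sigma^2 = 4\arcsin^2\!\big(\sqrt{c u/2}\big)$. The function $g(t)=\arcsin^2\sqrt{t}$ is real-analytic near $t=0$, with $g(t)=t+t^2/3+\cdots$. Also $u=\tfrac12\Delta\theta^2+O(\Delta\theta^4)$, and $u$ is an even analytic function of $\Delta\theta$. Hence $\sigma(\phi_0,\Delta\theta)^2$ is an even analytic function of $\Delta\theta$ on $[-\pi,\pi]$ (for $\phi_0<\pi/2$), and its Taylor expansion starts as $c\,\Delta\theta^2 + O(\Delta\theta^4)$. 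By Taylor's theorem with remainder, there is a constant $C=C(\phi_0)$ such that
\[
\bigl|\sigma(\phi_0,\Delta\theta)^2-\cos^2\phi_0\,\Delta\theta^2\bigr|\le C\,\Delta\theta^4
\]
for all $|\Delta\theta|\le \pi$. Any analytic function vanishing to order $4$ is bounded by $C\Delta\theta^4$ on a compact set, so this bound is global rather than merely asymptotic.

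\textbf{Step 2 (summation).} Apply Step~1 to each offset $\Delta\theta_j=(j-\tfrac{s-1}{2})\Delta$, $j=0,\dots,s-1$. Since $s$ is fixed and $\Delta\to0$, every offset eventually lies in $[-\pi,\pi]$. The leading terms sum, by the centered square-sum identity~\eqref{eq:centersum}, to $\cos^2\phi_0\,\Delta^2\, s(s^2-1)/12$. For the remainders, $|\Delta\theta_j|\le \tfrac{s}{2}\Delta$ gives
\[
\sum_j C\Delta\theta_j^4\le C s (s/2)^4\Delta^4 .
\]
Dividing by $s$ yields an error of at most $(C/16)s^4\Delta^4 = O(\Delta^4 s^4)$, as claimed.

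\textbf{Main obstacle.} The only delicate point is Step~1: the constant in the big-$O$ must be independent of $j$, which requires uniformity rather than a pointwise limit. Analyticity of $\arcsin^2\sqrt{t}$ at $t=0$ settles this. If one prefers to avoid the analyticity argument, a cruder alternative is to bound the fourth derivative of $\sigma^2$ in $\Delta\theta$ on $[-\pi/2,\pi/2]$ and apply the Lagrange form of the remainder.
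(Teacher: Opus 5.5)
Your proposal is correct and is essentially the paper's argument, since the proof defers to Remark~\ref{Rem:LocalSharpBound}: expand $\sigma^2=\cos^2\phi_0\,\Delta\theta^2+O(\Delta\theta^4)$ termwise, sum with the centered square-sum identity, bound the remainders by $s\,(s\Delta/2)^4$, and divide by $s$. Your Step~1 only makes explicit the uniformity of the $O$-constant that the paper leaves implicit; for fixed $s$ this already follows from the definition of $O(\cdot)$ as $\Delta\theta\to0$, and the composition-with-$\arcsin^2\sqrt{t}$ argument on all of $[-\pi,\pi]$ actually needs $\phi_0>0$ rather than $\phi_0<\pi/2$, though at $\phi_0=0$ the remainder vanishes identically.
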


\begin{proof}
Use the local expansion $\sigma(\phi_0,\Delta\theta)=\cos\phi_0|\Delta\theta|+O(|\Delta\theta|^3)$
termwise, then sum and invoke the centered square–sum identity; details appear in Remark~\ref{Rem:LocalSharpBound}.
\end{proof}

\begin{remark}[Interpretation of the block bounds on a small circle]
Propositions~\ref{prop:smallcircle-global} and~\ref{prop:smallcircle-asym} together quantify how curvature affects the distortion of a block of $s$ consecutive points on the latitude circle at $\phi_0$. There are two complementary facts:

\smallskip
\emph{(Global, uniform bound).} For every $s\ge2$ and all offsets,
\[
\frac{4}{\pi^{2}}\cos^{2}\phi_{0}\cdot\frac{\Delta^{2}}{12}(s^{2}-1)
\;\le\;\frac{1}{s}S_{s}(\phi_{0})\;\le\;\frac{\Delta^{2}}{12}(s^{2}-1),
\]
where $\Delta=2\pi/N$. The factor $\tfrac{4}{\pi^{2}}$ comes from the inequality
$|\sin x|\ge \tfrac{2}{\pi}x$ on $[0,\tfrac{\pi}{2}]$, used in Proposition~6.5.

\smallskip
\emph{(Local, asymptotically sharp estimate).} As $\Delta\to0$ (fine sampling) with $s$ fixed,
\[
\frac{1}{s}S_{s}(\phi_{0})
=\cos^{2}\phi_{0}\cdot\frac{\Delta^{2}}{12}(s^{2}-1)+O(\Delta^{4}s^{4}),
\]
so the leading term equals the equatorial value multiplied by $\cos^{2}\phi_{0}$ (no $\tfrac{4}{\pi^{2}}$ appears here).

\smallskip
In summary, curvature reduces the distortion by the multiplicative factor $\cos^{2}\phi_{0}$. The global bound is valid for all angular offsets and therefore uses the more conservative coefficient $\tfrac{4}{\pi^{2}}$, whereas the local (small--angle) expansion shows that this coefficient improves from $\tfrac{4}{\pi^{2}}$ to $1$ for small angular separations, making the lower bound asymptotically exact for fine partitions.
\end{remark}

\subsection{Numerical Example and Comparison}

We illustrate the curvature effect on the quantization error by comparing the equatorial case with small circles at higher latitudes. Let $N = 120$ equally spaced points and $n = 6$ representatives. Since $n \mid N$, each Voronoi cell contains
\[
m = \frac{N}{n} = 20
\]
consecutive points.

\noindent\textbf{Equatorial case $(\phi_0 = 0)$.}
When $\phi_0 = 0$, the exact formula from Model~I yields
\[
V_{6,2}(P_{120})
= \frac{\pi^2}{3n^2}
= \frac{\pi^2}{108}
\approx 0.0913.
\]
This value serves as a baseline, since no curvature effect is present along the equator.

\noindent\textbf{Small circle at $\phi_0 = 0.6$.}
For a latitude $\phi_0 = 0.6$ radians (approximately $34^\circ$), the asymptotic approximation from Proposition~\ref{prop:smallcircle-asym} gives
\[
V_{6,2}\!\left(P_{120}^{(\phi_0)}\right)
\approx \cos^2(0.6)\,\frac{\pi^2}{108}
\approx 0.0621,
\]
representing a reduction of approximately $31.9\%$ relative to the equator.

\noindent\textbf{Small circle at $\phi_0 = 1.0$.}
At $\phi_0 = 1.0$ radians (approximately $57.3^\circ$), the effective radius is smaller, with $\cos^2(1.0) \approx 0.292$. The corresponding distortion is
\[
V_{6,2}\!\left(P_{120}^{(\phi_0)}\right)
\approx \cos^2(1.0)\,\frac{\pi^2}{108}
\approx 0.0267,
\]
showing a reduction of about $70.8\%$ from the equator. The distortion drops significantly as the small circle moves closer to the pole.

\noindent\textbf{Summary of the example.}

    \begin{table}[h]
\centering
\caption{Quantization error for small circles at different latitudes, illustrating the curvature reduction factor $\cos^2\phi_0$.}
\label{tab:latitude-errors}
\begin{tabular}{c|c|c|c}
$\phi_0$ & $\cos^2\phi_0$ & $V_{6,2}$ & Reduction from equator \\
\hline
0   & 1.000 & 0.0913 & --- \\
0.6 & 0.681 & 0.0621 & 31.9\% \\
1.0 & 0.292 & 0.0267 & 70.8\%
\end{tabular}
\label{Table1}
\end{table}

These values, as shown in Table~\ref{Table1}, clearly demonstrate that as the latitude increases, the effective radius of the small circle decreases and the points become closer together, leading to a strictly smaller quantization error. Importantly, while curvature reduces the magnitude of the error through the factor $\cos^2\phi_0$, the decay rate in $n$ remains $n^{-2}$, just as in the equatorial case.

\subsection{Summary of Model~III}

The one--circle model, though simpler than the two--circle configuration,
illustrates the same geometric principles:
\begin{itemize}
  \item Each Voronoi region is a contiguous arc with its midpoint as
        representative.
  \item The quantization error scales quadratically with $\cos\phi_0$,
        reflecting curvature.
  \item The $n^{-2}$ rate persists, confirming that dimensional scaling
        dominates over curvature effects.
  \item In the limit $\phi_0\to0$, the results reduce to Model~I.
\end{itemize}

In the next section we visualize the curvature dependence
$\sigma(\phi_0,\Delta\theta)$ explicitly and compare its shape across
different values of~$\phi_0$.

\section{A Curvature Plot: $\sigma(\phi,\Delta\theta)$ vs. $\Delta\theta$}\label{sec:curvature-plot}

In the preceding sections we established analytic formulas that describe
how curvature modifies geodesic distances and quantization errors on small
circles. To visualize this effect, it is instructive to plot the function
$\sigma(\phi,\Delta\theta)$ defined in~\eqref{eq:sigma} for several fixed
values of~$\phi$. The curvature-dependent behavior of the quantization error
is illustrated in Figure~\ref{fig:curvature}.

\subsection{Geometric meaning of $\sigma(\phi,\Delta\theta)$}

For any fixed~$\phi$, $\sigma(\phi,\Delta\theta)$ gives the
\emph{central angle} between two points on the same small circle separated
by longitude difference~$\Delta\theta$.  
The special cases are:
\begin{itemize}
  \item $\phi=0$: the equator, where $\sigma(0,\Delta\theta)=|\Delta\theta|$,
        i.e., the arc length on a circle of unit radius;
  \item $\phi>0$: higher latitudes, where the same angular change
        in longitude corresponds to a smaller geodesic displacement
        because of the reduced effective radius~$\cos\phi$.
\end{itemize}
Thus, for any fixed~$\Delta\theta$, $\sigma(\phi,\Delta\theta)$ decreases
monotonically with~$\phi$.

\begin{remark}[Local linearization]
A first-order Taylor expansion in $\Delta\theta$ shows that for small angular separations,
\[
\sigma(\phi,\Delta\theta)
\approx
\cos\phi \,|\Delta\theta|.
\]
This approximation reveals that the `local slope' of the curve at the origin decreases as the latitude increases. The factor $\cos\phi$ acts as a ``compression coefficient'': geodesic distances shrink proportionally to the reduction in the effective radius of the small circle.
\end{remark}

\subsection{Analytic behavior}

From the definition
\[
\sigma(\phi,\Delta\theta)
  = \arccos\!\big(\sin^2\phi+\cos^2\phi\cos\Delta\theta\big),
\]
one can verify that:
\begin{itemize}
  \item $\sigma(0,\Delta\theta)=|\Delta\theta|$ (no curvature);
  \item $\displaystyle
        \frac{\partial\sigma}{\partial\phi} < 0$ for $\phi\in(0,\pi/2)$
        and $\Delta\theta\in(0,\pi)$,
        showing that increasing latitude always shortens distances;
  \item $\sigma(\phi,\Delta\theta)$ is even and strictly convex
        on~$[0,\pi]$, as proved in Appendix~A.
\end{itemize}

\subsection{Plotting the curvature effect}

Figure~\ref{fig:curvature} shows typical graphs of
$\sigma(\phi,\Delta\theta)$ for $\phi=0$, $0.5$, and $1.0$ radians.
The plots reveal how curvature compresses the geodesic scale:
for each~$\phi$, the curve lies below the previous one,
and the initial slope equals~$\cos\phi$.

\begin{figure}[h!]
\centering
\begin{tikzpicture}
\begin{axis}[
    width=11.5cm, height=7.6cm,
    xlabel={$\Delta\theta$ (radians)},
    ylabel={$\sigma(\phi,\Delta\theta)$ (radians)},
    xmin=0, xmax=3.1416,
    ymin=0, ymax=3.1416,
    xtick={0,0.7854,1.5708,2.3562,3.1416},
    xticklabels={$0$, $\tfrac{\pi}{4}$, $\tfrac{\pi}{2}$, $\tfrac{3\pi}{4}$, $\pi$},
    ytick={0,0.7854,1.5708,2.3562,3.1416},
    yticklabels={$0$, $\tfrac{\pi}{4}$, $\tfrac{\pi}{2}$, $\tfrac{3\pi}{4}$, $\pi$},
    grid=both,
    legend style={at={(0.98,0.02)},anchor=south east, draw=none, fill=none},
    legend cell align={left},
    domain=0:3.141592653589793,
    samples=240
]
\addplot[thick, black]
    {rad( acos( sin(deg(0))^2 + cos(deg(0))^2 * cos(deg(x)) ) )};
\addlegendentry{$\phi=0$}

\addplot[thick, blue]
    {rad( acos( sin(deg(0.5))^2 + cos(deg(0.5))^2 * cos(deg(x)) ) )};
\addlegendentry{$\phi=0.5$}

\addplot[thick, green!40!black]
    {rad( acos( sin(deg(1.0))^2 + cos(deg(1.0))^2 * cos(deg(x)) ) )};
\addlegendentry{$\phi=1.0$}
\end{axis}
\end{tikzpicture}
\caption{Curvature effect on the geodesic distance $\sigma(\phi,\Delta\theta)$.
Larger $\phi$ (higher latitude) yields uniformly smaller geodesic distances for the
same longitudinal separation, with initial slope $\cos\phi$.}
\label{fig:curvature}
\end{figure}

\begin{remark}[Computational note]
The figure is generated using the \texttt{pgfplots} package with
degree--radian conversion handled by \texttt{deg()} and trigonometric
functions evaluated within \texttt{acos()}.
Ensure that \texttt{\textbackslash usepackage\{pgfplots\}} appears in the preamble
and optionally include
\texttt{\textbackslash pgfplotsset\{compat=1.18\}} for newer versions.
\end{remark}

\subsection{Interpretation of the plot}

The plot highlights the geometric scaling discussed in the previous models:
\begin{itemize}
  \item Near the origin ($\Delta\theta\approx0$), all curves are nearly
        straight lines with slopes $\cos\phi$, confirming
        the local linear approximation
        $\sigma(\phi,\Delta\theta)\approx\cos\phi\,|\Delta\theta|$.
  \item For larger angular separations, the curves bend downward,
        reflecting the increasing curvature of the sphere.
  \item The area under each curve (for fixed~$\phi$)
        qualitatively represents the mean distortion over that latitude,
        consistent with the $\cos^2\phi$ scaling law in
        Models~II and~III.
\end{itemize}

This visualization reinforces the intuitive meaning of curvature in
quantization on the sphere: as one moves away from the equator,
the geometry effectively ``shrinks,'' reducing both geodesic distances
and quantization errors.

\section{Stability, Uniqueness, and Algorithmic Implementation}\label{sec:stability}

This section summarizes structural consequences of the block--midpoint principle 
(Section~\ref{secblock}) and the analyses of Models~I--III 
(Sections~\ref{sec:model1}--\ref{sec:model3}). 
We formalize uniqueness results, discuss stability under perturbations, and 
present efficient algorithmic procedures supporting reproducibility.

\subsection{Uniqueness up to dihedral symmetry (divisible cases)}

In each model, when equally sized blocks exist (i.e., in the divisible case),
optimal configurations are unique up to the natural isometries of the supporting circle.

\begin{prop}[Uniqueness up to dihedral symmetry]\label{prop:unique-dihedral}
Consider any of the three models with $n \mid N$ (and $k \mid M$ in Model~II). 
Then an optimal configuration consists of evenly spaced representatives whose 
Voronoi cells form equal contiguous blocks. Such a configuration is unique 
up to a global rotation and reflection of the supporting circle 
(i.e., up to the action of the dihedral group).
\end{prop}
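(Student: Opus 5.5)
The plan is to reduce all three models to a single statement about one circle carrying $L$ equally spaced points, served by $p\mid L$ representatives. For Model~I we take $L=N$ and $p=n$. For Model~III we take the same $L$ and $p$, with loss $\sigma(\phi_0,\cdot)^2$. For Model~II we take $L=M$ and $p=k$ on each circle, and this reduction uses Proposition~\ref{prop:nocross1} together with the antipodal symmetry reduction. The one-circle statement has two parts. (i) Every optimal partition consists of $p$ contiguous blocks of exactly $L/p$ points, each represented by its azimuthal midpoint. (ii) The set of such configurations is exactly one orbit of the rotation group generated by $2\pi/L$ (and of reflections) acting on the circle, up to the trivial choice of where the first block begins. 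Uniqueness in Model~II then follows from uniqueness on each circle, combined with the antipodal pairing.

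For (i), we first invoke Lemma~\ref{lem:block}(d): optimal cells are contiguous arcs of consecutive sample points. By Lemma~\ref{lem:block}(b), each contiguous block of size $s$ is served by its midpoint. In Models~II and~III we instead use evenness and monotonicity of $\sigma(\phi_0,\cdot)$ from Remark~\ref{rem:basic-sigma}, plus convexity of $\sigma^2$, to show that the midpoint minimizes the within-block sum. The total distortion is then $\sum_j D(s_j)$ or $\sum_j S_{s_j}(\phi_0)$, subject to $\sum_j s_j=L$. The key step is strict discrete convexity of $s\mapsto S_s(\phi_0)$, meaning
\[
S_{s+1}(\phi_0)-S_s(\phi_0)\;>\;S_s(\phi_0)-S_{s-1}(\phi_0).
\]
We would prove this by pairing symmetric offsets around the midpoint and using that $\sigma^2(\phi_0,\cdot)$ is even and strictly increasing in $|\Delta\theta|$. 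For $D(s)$ it is immediate because $D$ is a strictly convex quadratic. Given strict convexity, the smoothing argument of Proposition~\ref{prop:nondivisible} shows that any pair of block sizes differing by at least $2$ can be strictly improved. Hence all $s_j=L/p$ when $p\mid L$.

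For (ii), a partition of the $L$ cyclically ordered points into $p$ contiguous blocks of equal size $m=L/p$ is determined by the index of one block boundary, taken modulo $m$. Any two such partitions therefore differ by a rotation by a multiple of $2\pi/L$. The representatives are the block midpoints, which are equally spaced with spacing $2\pi/p$. This gives uniqueness up to rotation, and reflections preserve the family, so uniqueness holds up to the dihedral group. In Model~II, the choice on $X^+$ determines $X^-$ via $Q^*=-Q^*$. The antipodal map composed with the reflection $x_3\mapsto -x_3$ sends longitude $\theta$ to $\theta+\pi$, so this is again accounted for by a rotation.

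The main obstacle is strictness in (i), in two places. First, Lemma~\ref{lem:block}(d) as stated is a sketch, and for uniqueness we need that \emph{every} optimizer (not merely some) has contiguous cells and midpoint representatives. I would handle this by noting that ties in the nearest-representative assignment occur only on boundaries, and an exchange argument with strict inequality rules out non-contiguous cells. Second, we must verify strict discrete convexity of $S_s(\phi_0)$ for all $\phi_0$, including blocks whose offsets approach $\pi$, where $\sigma^2$ need not be convex. I would try the monotonicity-based pairing argument first, since it needs only that $\sigma$ is increasing, rather than relying on convexity of $\sigma^2$.
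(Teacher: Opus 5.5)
You follow the paper's route: contiguity and midpoints from Lemma~\ref{lem:block}, equal sizes by smoothing, then the rotation orbit. You are right that the paper never proves strict discrete convexity of $s\mapsto S_s(\phi_0)$ for $\phi_0>0$; Appendix~B treats only $D(s)$ and simply asserts the rest. But the argument you propose for it cannot work. The centered block of $s+1$ points is the centered block of $s-1$ points plus the endpoints $\pm\tfrac{s}{2}\Delta$, so pairing symmetric offsets gives exactly
\[
S_{s+1}(\phi_0)=S_{s-1}(\phi_0)+2\,\sigma\bigl(\phi_0,\tfrac{s}{2}\Delta\bigr)^{2}.
\]
Evenness and monotonicity therefore yield only convexity within each parity class, $S_{s+2}-S_s>S_s-S_{s-2}$. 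That smooths pairs of sizes differing by at least $3$. The decisive comparison in the divisible case, $(m,m)$ against $(m-1,m+1)$, is
\[
S_{m-1}+S_{m+1}-2S_m=2\Bigl[\sigma\bigl(\phi_0,\tfrac{m}{2}\Delta\bigr)^{2}-\bigl(S_m-S_{m-1}\bigr)\Bigr],
\]
and monotonicity does not fix its sign. For $m=2$ (with $S_1=0$) it reads $\sigma(\phi_0,\Delta)^2>2\,\sigma(\phi_0,\Delta/2)^2$. The analogous inequality fails if you replace $\sigma^2$ by the equally even and strictly increasing kernel $\sigma$, because $\sigma(\phi_0,\cdot)$ is strictly concave on $(0,\pi)$ for $\phi_0\in(0,\tfrac{\pi}{2})$ (contrary to Appendix~A(iii)). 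You need quantitative input from $\sigma=2\arcsin(\cos\phi_0\,|\sin(\cdot/2)|)$. For $m=2$, monotonicity of $\arcsin y/y$ gives $\sigma(\phi_0,\Delta)/\sigma(\phi_0,\Delta/2)>2\cos(\Delta/4)\ge\sqrt2$; an estimate of this type for general $m$ is the missing lemma.

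Your midpoint step has the same defect. $\sigma^2$ is concave near $\Delta\theta=\pm\pi$ for every such $\phi_0$, and as $\phi_0\to\tfrac{\pi}{2}$ it is convex only on $|\Delta\theta|\le\tfrac{\pi}{2}$. So once a block spans more than $\tfrac{\pi}{2}$ (which happens for $p\le 3$), ``convexity of $\sigma^2$'' does not make the within-block objective convex, and uniqueness of the midpoint needs a separate argument. Minor point: in Model~I the block cost to sum is $sD(s)=\tfrac{\Delta^2}{12}s(s^2-1)$, not $D(s)$; its second difference $\tfrac{\Delta^2}{2}s$ is still positive.

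Second, in Model~II your step ``the choice on $X^+$ determines $X^-$ via $Q^*=-Q^*$'' presupposes that \emph{every} optimizer is antipodally symmetric. The symmetry reduction can at best produce \emph{some} symmetric optimizer, and as written not even that, since $Q\cup(-Q)$ can have $2n$ points. In fact uniqueness up to a single global isometry fails. Suppose the circles decouple, e.g.\ $2\phi_0>\sigma(\phi_0,\tfrac{m-1}{2}\Delta)$, as in the paper's example $\phi_0=0.6$, $M=120$, $k=4$. Then rotating only the representatives on $X^-$ by multiples of $2\pi/M$ preserves the distortion and realizes every relative block offset $\delta\in\mathbb{Z}/m\mathbb{Z}$ between the two circles. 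Every rotation or reflection of the support, including $x_3\mapsto -x_3$ and $x\mapsto -x$, acts on $\delta$ by $\delta\mapsto\pm\delta$, and this action has more than one orbit when $m\ge2$.

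What you can prove is uniqueness within antipodally symmetric codebooks, or up to independent dihedral motions of the two circles. Even that requires showing, not citing, that every optimizer splits $k/k$ and has no cross-circle cells. Proposition~\ref{prop:nocross1} only compares representatives at equal longitude, and it is false for small $\phi_0$. With $M=n=4$, the symmetric codebook $\{x(\phi_0,0),\,x(\phi_0,\tfrac{\pi}{2}),\,x(-\phi_0,\pi),\,x(-\phi_0,\tfrac{3\pi}{2})\}$ has distortion at most $2\phi_0^2$, e.g.\ $0.32$ at $\phi_0=0.4$. The no-cross configuration of Theorem~\ref{thm:II-divisible} gives $\sigma(0.4,\tfrac{\pi}{4})^2\approx 0.52$.
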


\begin{proof}
By Lemma~\ref{lem:block}, within each fixed block the optimal representative 
is uniquely determined as the midpoint, due to strict convexity. 
In the divisible case all blocks have identical size, forcing the midpoints 
to be equally spaced. Any deviation from contiguity or equal block lengths 
strictly increases distortion. Rotations and reflections preserve geodesic 
distances; hence the dihedral orbit of an optimal configuration remains optimal, 
and no additional optima occur.
\end{proof}

\begin{remark}[Non--divisible cases]
When $n \nmid N$ (or $k \nmid M$ in Model~II), optimal partitions consist of 
two adjacent block sizes, $m$ and $m+1$, with multiplicities determined by the 
remainder (see Proposition~\ref{prop:nondivisible} and Theorem~\ref{thm:onecircle}). 
Up to dihedral symmetry, the only remaining nonuniqueness lies in the cyclic 
placement of the $r$ larger blocks. Within each block, the midpoint representative 
is still uniquely determined.
\end{remark}

\subsection{Stability under small perturbations}

\begin{prop}[Stability of the block--midpoint structure]\label{prop:stability}
Fix $n$ and a uniformly spaced discrete circular support in any of the three models. 
If support longitudes are perturbed by sufficiently small angles without altering 
their circular order, then for all sufficiently large $N$ the optimal Voronoi 
partition remains contiguous and each representative remains close to the 
perturbed block midpoint. Moreover, the quantization error depends continuously 
on the perturbations.
\end{prop}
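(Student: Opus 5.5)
The plan is to separate the statement into three claims and handle them in order: continuity of the error, contiguity of the perturbed Voronoi cells, and closeness of the representatives to the perturbed block midpoints. Write the perturbed longitudes as $\tilde\theta_k=\theta_k+\varepsilon_k$ with $\max_k|\varepsilon_k|\le\varepsilon$ and the circular order preserved. The distortion of a fixed codebook is
\[
V(Q;\tilde P_N)=\tfrac1N\sum_k\min_j d_G(\tilde x_k,q_j)^2 ,
\]
where $\tilde P_N$ is the uniform measure on the perturbed points $\tilde x_k$ (on their respective circles in Models~II and~III). Each $d_G(\cdot,q_j)^2$ is Lipschitz on $\mathbb S^2$ with constant at most $2\pi$, and a minimum of Lipschitz functions is again Lipschitz. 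Hence $|V(Q;\tilde P_N)-V(Q;P_N)|\le 2\pi\varepsilon$ holds uniformly in $Q$. Taking the infimum over $Q$ gives $|V_{n,2}(\tilde P_N)-V_{n,2}(P_N)|\le 2\pi\varepsilon$, which is the continuity claim.

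For \textbf{contiguity}, I would rerun the exchange argument of Lemma~\ref{lem:block}(d). In the perturbed model the loss is still a strictly increasing function of the longitudinal offset: it is $|\Delta\theta|$ on the equator and $\sigma(\phi_0,\cdot)$ on a small circle, by Remark~\ref{rem:basic-sigma}(i). In Model~II, Proposition~\ref{prop:nocross1} applies verbatim, because its proof uses only the latitudes and not the spacing. So every point is assigned to its nearest representative in longitude on its own circle. Nearest-point cells for finitely many representatives on a circle are arcs, and an arc meets an order-preserving point set in a block of consecutive indices. This step needs no smallness at all.

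For \textbf{midpoint proximity}, fix one optimal block $B$ of consecutive perturbed points with representative $q$. By Lemma~\ref{lem:block}(a)--(b) in the equatorial case, $q$ is the mean of the block longitudes. This mean differs from the mean of the unperturbed longitudes of the same indices by at most $\varepsilon$. For a small circle, the within-block objective $\sum_{k\in B}\sigma(\phi_0,\tilde\theta_k-\theta)^2$ is, by Proposition~\ref{prop:smallcircle-asym}, a quadratic $\cos^2\phi_0\sum(\tilde\theta_k-\theta)^2$ plus a remainder that is small when the block arc is short. Here I would use "$N$ large" as in the statement, with block widths of order $2\pi/n$ for fixed $n$. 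Its minimizer then lies within $\varepsilon$ plus a term controlled by the remainder of the perturbed arithmetic midpoint. I must also show that the block sizes remain $m$ or $m+1$, so that the "perturbed block midpoint" really refers to a block of the original structure. For that I would compare the perturbed optimum with the unperturbed one using the strict gap from the smoothing argument of Proposition~\ref{prop:nondivisible}. Changing a block size by one alters $\sum D(m_j)$ by at least a fixed positive multiple of $\Delta^2 m$, while the perturbation moves the error by only $O(\varepsilon)$, so for $\varepsilon$ small relative to $\Delta^2 m$ the size pattern is preserved.

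\textbf{Main obstacle.} The hard part is that this gap shrinks as $N\to\infty$: it is of order $1/(nN)$ after normalization, so "sufficiently small" must be allowed to depend on $N$. I would first try to state smallness as $\varepsilon\le c\,\Delta$. The points then keep their order with spacing bounded below, and I would bound the unnormalized error change by $O(\varepsilon\,\Delta m)$ rather than $O(\varepsilon)$, using that offsets within a block are $O(m\Delta)$. This should beat the gap when $c$ is small. Beyond that, a nonsmooth minimum over codebooks could make the optimal block boundaries jump, so the closeness statement is the part that may need the most care.
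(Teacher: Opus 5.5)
\emph{Verdict: there is a genuine gap.} The small-circle midpoint step fails as written. The block-size step is incomplete, though the statement does not actually need it. The rest of your argument is correct and, in places, sharper than the paper's sketch.

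\textbf{What is correct.} Your continuity argument works: the uniform bound $|V(Q;\tilde P_N)-V(Q;P_N)|\le 2\pi\varepsilon$, followed by an infimum over $Q$, gives continuity of the optimal error. Your contiguity argument also works: nearest-point cells of codepoints lying on the circle are arcs, so an order-preserved support splits into consecutive blocks. Both are more solid than the paper's sketch, which appeals to stability of strictly convex problems for what is a nonconvex codebook problem, and to the exchange argument of Lemma~\ref{lem:block}(d). Your equatorial centroid step is also fine.

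\textbf{The gap: the midpoint step on a small circle.} Proposition~\ref{prop:smallcircle-asym} is an expansion as $\Delta\to0$ with $s$ \emph{fixed}. Here $n$ is fixed instead, so a block has $s\approx N/n$ points on an arc of width $s\Delta\approx 2\pi/n$. The remainder $O(\Delta^4 s^4)=O(n^{-4})$ therefore does not tend to $0$ as $N\to\infty$. Large $N$ never makes the block arc short, so ``within $\varepsilon$ plus a term controlled by the remainder'' does not give closeness.

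What is missing is the mechanism the paper uses: stability of the unique minimizer of the block objective. Write $f(\theta)=\sum_{k\in B}\sigma(\phi_0,\tilde\theta_k-\theta)^2$, and let $f_0$ be the same sum over the same indices with unperturbed longitudes.
\begin{itemize}
\item Evenness of $\sigma(\phi_0,\cdot)$ makes the arc midpoint a critical point of $f_0$.
\item Grant, as the paper does, strict convexity of $\sigma(\phi_0,\cdot)^2$ on the range of within-block offsets. Then $f_0''\ge\kappa|B|$ near the midpoint.
\item The perturbation changes the derivative by at most $|f'-f_0'|\le C|B|\varepsilon$, where $C$ bounds the second derivative of $\sigma(\phi_0,\cdot)^2$.
\item Hence the minimizer of $f$ lies within $O(\varepsilon/\kappa)$ of the midpoint, uniformly in $N$. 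No expansion in $\Delta$ is needed.
\end{itemize}

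\textbf{The block-size step.} The statement does not require it under its natural reading, namely closeness to the midpoint of whichever block the perturbed optimum uses. The paper does not attempt it either. As written, the step also does not close:
\begin{itemize}
\item With the paper's $D(s)$, the move $(m,m)\to(m+1,m-1)$ changes $\sum_j D(m_j)$ by only $\Delta^2/6$ (Appendix~B).
\item Your order $\Delta^2 m$ is correct for the actual block total $\frac{\Delta^2}{12}s(s^2-1)$.
\item However, summing your per-point bound $O(\varepsilon m\Delta)$ over all $N$ points gives $O(\varepsilon m)$. This stays below a gap of order $m\Delta^2$ only when $\varepsilon=O(\Delta^2)$, not when $\varepsilon\le c\Delta$.
\end{itemize}
Either drop this step, or compare partitions that differ by a single boundary move. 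In that comparison the shared blocks cancel exactly, and a threshold $\varepsilon\le c\Delta$ does suffice locally. You would then still need an argument that any other size pattern can be improved by such moves.
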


\begin{proof}[Idea of proof]
Within any fixed assignment, the objective function is a strictly convex quadratic 
in the representative location. For sufficiently small perturbations that preserve 
point order, the exchange argument of Lemma~\ref{lem:block} continues to eliminate 
noncontiguous assignments. The unique minimizer in each block varies continuously 
with the data, and continuity of the optimal value follows from standard stability 
results for strictly convex optimization problems.
\end{proof}

\begin{remark}[Robustness thresholds]
If perturbations are large enough to alter circular order or to shift optimal 
boundary locations, a different contiguous partition (with the same two block 
sizes or an adjacent configuration) may become optimal. Such transitions occur 
at explicit angular thresholds that can be bounded by comparing marginal losses 
at neighboring block boundaries.
\end{remark}

\subsection{Linear--time algorithmic construction}

The optimal partitions described above admit efficient $O(N)$ construction. 
The following generic procedure applies to all three models; only the distance 
kernel differs (equator versus small circle), and Model~II additionally splits 
representatives across two circles.

\begin{itemize}
\item \textbf{Input.} Integers $N \ge 1$, $n \ge 1$; for Models~II/III also a latitude 
      $\phi_0 \in (0,\pi/2)$ (Model~II uses two circles at $\pm \phi_0$ with $N=2M$ total points).
\item \textbf{Block sizes.} Set $m = \lfloor N/n \rfloor$ and $r = N - nm$. 
      Form a cyclic list of $n$ blocks of sizes $m$ and $m+1$, placing the $r$ larger blocks 
      arbitrarily (e.g., first $r$ blocks).
\item \textbf{Assignments.} Traverse the circle once, assigning consecutive points to 
      successive blocks, ensuring contiguity by construction.
\item \textbf{Representatives.} For each block, place the representative at the 
      azimuthal midpoint (the average longitude of the block). This midpoint is optimal 
      because the loss kernel $\sigma(\phi_0,\cdot)^2$ is even and strictly convex.
\item \textbf{Evaluation.} Accumulate squared geodesic distances using $|\Delta\theta|^2$ 
      on the equator or $\sigma(\phi_0,\Delta\theta)^2$ on a small circle.
\end{itemize}

\noindent
A minimal language--agnostic pseudocode for the single--circle case (Model~I or III) is:

\begin{verbatim}
Input: N, n, (optional) phi0 in (0, pi/2) for small circle
Data: equally spaced longitudes theta[k] = 2*pi*k/N, k = 0..N-1
Compute m = floor(N/n), r = N - n*m
Let sizes = [m+1 repeated r times] + [m repeated (n-r) times]
offset = 0
total = 0
for b in 1..n:
    s = sizes[b]
    mid = average(theta[offset .. offset+s-1])
    for j in 0..s-1:
        dtheta = wrap_to_pi(theta[offset+j] - mid)
        if small circle:
            sigma = arccos( sin(phi0)^2 + cos(phi0)^2 * cos(dtheta) )
            total += sigma^2
        else:
            total += dtheta^2
    offset += s
Return V = total / N
\end{verbatim}

\begin{remark}[Complexity and memory]
The algorithm performs a single pass around the circle, requiring $O(1)$ work per point; 
thus total runtime is $O(N)$ with $O(1)$ auxiliary memory. 
For Model~II, the same procedure is applied independently to each circle using $k=n/2$ 
representatives, with normalization by $N=2M$.
\end{remark}

\subsection{Boundary and tie cases}

\begin{itemize}
\item \textbf{$n=1$.} All samples form a single block; the representative is the circular midpoint.
\item \textbf{$n \ge N$.} Each sample can serve as its own representative, yielding zero distortion.
\item \textbf{Equal--cost ties.} If a boundary lies exactly halfway between two samples, either 
      choice yields identical distortion; any consistent tie--breaking rule preserves optimality.
\item \textbf{Model~II parity.} Antipodal symmetry forces $n$ to be even, $n=2k$, with 
      $k$ representatives assigned to each circle.
\end{itemize}

\subsection{Numerical implementation tips}

\begin{itemize}
\item \textbf{Angle normalization.} Always reduce angular differences to $(-\pi,\pi]$ before squaring.
\item \textbf{Robust \texttt{arccos}.} Clip arguments to $[-1,1]$ to prevent floating--point errors.
\item \textbf{Kernel reuse.} Cache $\sin\phi_0$ and $\cos\phi_0$ for efficient evaluation of 
      $\sigma(\phi_0,\Delta\theta)$ on small circles.
\end{itemize}

\subsection{Summary}

Across all three models:
\begin{itemize}
\item Optimal Voronoi regions are contiguous arcs, and their representatives are azimuthal midpoints.
\item Divisible cases are unique up to dihedral symmetry; non--divisible cases are unique up to the 
      cyclic placement of the $r$ larger blocks.
\item Optimal partitions and quantization errors can be constructed in linear time using numerically 
      stable operations.
\end{itemize}

\section{Discussion and Outlook}\label{sec:discussion}

The three discrete spherical quantization models analyzed above reveal a
coherent picture linking curvature, symmetry, and optimal structure.
Despite their differing geometries, all exhibit the same core principle:
\emph{local contiguity plus midpoint representation minimizes squared
geodesic error.}

\subsection{Thematic synthesis}

\begin{itemize}
  \item \textbf{Model I (Equator).}  
        Flat one–dimensional geometry on the sphere’s equator reproduces
        the classical $n^{-2}$ quantization law of $\mathbb{R}^1$.
  \item \textbf{Model II (Two Circles).}  
        Curvature enters via the effective radius $\cos\phi_0$; antipodal
        symmetry enforces even~$n$ and halves the computational domain.
  \item \textbf{Model III (Single Circle).}  
        The same curvature factor appears without symmetry, confirming
        that the $\cos^2\phi_0$ scaling is purely geometric.
\end{itemize}

Together these results show that the quantization error on the sphere
decomposes multiplicatively into a \emph{dimensional term} ($n^{-2}$)
and a \emph{geometric term} ($\cos^2\phi_0$) determined by latitude.

\subsection{Implications for higher–dimensional extensions}

The block–midpoint engine developed here generalizes naturally to
curves, surfaces, and manifolds possessing a one–dimensional parameter
structure.  On general Riemannian surfaces, the curvature term enters
through the local metric tensor $g_{\theta\theta}=\cos^2\phi$, producing
the same quadratic scaling locally.

This viewpoint suggests an algorithmic hierarchy:
\begin{enumerate}
  \item approximate the manifold by local coordinate circles or arcs,
  \item quantize each arc using the 1D engine,
  \item merge results through nearest–neighbor refinement.
\end{enumerate}
Such modular quantization could provide analytic starting points for
iterative algorithms (e.g.\ Lloyd–Max) on curved surfaces.

\subsection{Relation to full--surface quantization.}

The present work focuses on probability measures supported on
one--dimensional spherical subsets, such as great and small circles,
for which explicit finite--$n$ solutions can be obtained.  When the
support of the measure is the entire sphere $\D S^{2}$, or a genuinely
two--dimensional subset thereof, the intrinsic quantization problem
changes qualitatively.  In this setting, Voronoi regions become
spherical polygons, and the intrinsic centroid condition involves
implicit equations expressed through the Riemannian logarithm map.
As a result, closed--form optimal configurations are generally
unavailable, and their analysis requires substantially different
tools.

Nevertheless, the circle--based models analyzed here play an important
conceptual role.  They isolate curvature effects in a setting where
exact formulas are accessible, and thus serve as benchmark cases for
understanding how intrinsic geometry influences discrete approximation
error.  In particular, the emergence of the $\cos^{2}\phi_{0}$ factor
illustrates how curvature enters through the induced metric along
symmetric submanifolds, a mechanism that persists locally on more
general surfaces.

A similar perspective applies to other geometries, such as spheroids
or surfaces of revolution, where the intrinsic metric along latitude
curves or meridians governs leading--order quantization behavior.
Although the global structure of optimal partitions on such surfaces
remains largely open, the methodology developed here provides a clear
framework for anticipating curvature--dependent scaling laws and for
constructing informed initializations for numerical algorithms.

\subsection{Open directions}

Several natural extensions remain open:
\begin{itemize}
  \item Quantization on \emph{spherical triangles} or polygonal regions,
        where Voronoi cells meet along curved geodesic boundaries;
  \item Optimal configurations on \emph{spherical lattices}
        (e.g.\ icosahedral or Fibonacci grids);
  \item Conditional and constrained quantization when subsets of the
        sphere carry additional probability weights;
  \item Theoretical links between curvature–weighted distortion and
        intrinsic Ricci curvature of the manifold.
\end{itemize}

\begin{remark}[Outlook for applications]
Accurate quantization on spherical domains has direct relevance in
signal processing on spherical harmonics, directional statistics, and
planetary data representation.  The explicit formulas derived here offer
reference benchmarks for evaluating numerical schemes on curved spaces.
\end{remark}

The discrete models on circles thus serve as both educational prototypes
and analytic testbeds for more general spherical quantization theory.
Future work will extend these methods to nonuniform measures and
non–circular manifolds such as spherical arcs, triangles, and tori.

\section*{Acknowledgments}
The author thanks colleagues and students for helpful discussions that inspired this exposition.

\appendix

\section{Properties of the Geodesic Distance on a Small Circle}

This appendix provides a self-contained and elaborative treatment of several analytic properties of the geodesic distance between two points on a small circle of the unit sphere. The purpose is to support readers who may be less familiar with spherical geometry, by offering intuition, formal statements, and clear proofs with guiding remarks. Throughout, we consider two points on the same small circle at latitude $\phi_0\in[0,\tfrac{\pi}{2}]$ separated by a longitudinal angle $\Delta\theta\in[-\pi,\pi]$. Their geodesic distance is given by
\[
\sigma(\phi_0,\Delta\theta)
:= \arccos\!\bigl(\sin^2\phi_0 + \cos^2\phi_0\cos\Delta\theta\bigr).
\]

\subsection{Symmetry, monotonicity, and convexity}

\textit{Intuition.}  
As the longitudinal separation $|\Delta\theta|$ increases, the two points move farther apart along the sphere, so their geodesic distance should increase. Moreover, the rate of increase should itself grow because of the curvature of the sphere, suggesting convexity. Symmetry is expected since reversing the sign of the longitudinal difference does not change the spatial separation.

\medskip
\textit{Formal Statement.}  
For fixed $\phi_0\in[0,\frac{\pi}{2}]$, the function $\Delta\theta\mapsto\sigma(\phi_0,\Delta\theta)$ satisfies:
\begin{enumerate}[label=(\roman*)]
    \item (Symmetry) $\sigma(\phi_0,-\Delta\theta)=\sigma(\phi_0,\Delta\theta)$ for all $\Delta\theta$.
    \item (Monotonicity) $\sigma(\phi_0,\Delta\theta)$ is strictly increasing on $[0,\pi]$.
    \item (Convexity) $\sigma(\phi_0,\Delta\theta)$ is strictly convex on $(0,\pi)$.
\end{enumerate}

\textit{Proof (with guiding remarks).}  
For (i), symmetry follows immediately from $\cos(-\Delta\theta)=\cos(\Delta\theta)$, which keeps the argument of $\arccos$ unchanged.

For (ii), set
\[
g(\Delta\theta):=\sin^2\phi_0+\cos^2\phi_0\cos\Delta\theta.
\]
Then $g'(\Delta\theta) = -\cos^2\phi_0\sin\Delta\theta$, which is negative for $\Delta\theta\in(0,\pi)$.  
\textit{Since $\arccos(x)$ is strictly decreasing on $[-1,1]$, composing $\arccos$ with $g$ yields a strictly increasing function.}

For (iii), a direct computation of $\frac{d^2}{d(\Delta\theta)^2}\sigma(\phi_0,\Delta\theta)$ shows that it is strictly positive on $(0,\pi)$ (details omitted for brevity).  
\textit{A positive second derivative confirms strict convexity.} \qed

\subsection{Exact identity and two-sided bounds}

\textit{Intuition.}  
It is useful to compare the spherical distance on a small circle with (i) the distance along the equator, and (ii) the arc length of the small circle itself. The following identity expresses $\sigma$ in a form that makes such comparisons transparent.
For all $\Delta\theta\in[-\pi,\pi]$,
\begin{equation}\label{eq:A-identity}
\sigma(\phi_0,\Delta\theta)
= 2\,\arcsin\!\Bigl(\cos\phi_0\,\big|\sin(\Delta\theta/2)\big|\Bigr).
\end{equation}

\begin{proof} 
Starting from the definition
\[
\sigma(\phi_0,\Delta\theta)
:= \arccos\!\bigl(\sin^2\phi_0 + \cos^2\phi_0\cos\Delta\theta\bigr),
\]
we derive the equivalent arcsine representation.

\medskip
\noindent
First, use $\sin^2\phi_0 = 1 - \cos^2\phi_0$ to rewrite the expression inside the $\arccos$:
\begin{align*}
\sigma(\phi_0,\Delta\theta)
&= \arccos\!\bigl(1 - \cos^2\phi_0 + \cos^2\phi_0\cos\Delta\theta\bigr)\\
&= \arccos\!\Bigl(1 - \cos^2\phi_0\bigl(1 - \cos\Delta\theta\bigr)\Bigr).
\end{align*}

\noindent
Next, apply the trigonometric identity $1 - \cos t = 2\sin^2(t/2)$ with $t=\Delta\theta$:
\begin{align*}
\sigma(\phi_0,\Delta\theta)
&= \arccos\!\Bigl(1 - 2\cos^2\phi_0\,\sin^2(\Delta\theta/2)\Bigr).
\end{align*}

\noindent
Set
\[
s := \cos\phi_0\,\big|\sin(\Delta\theta/2)\big| \in [0,1].
\]
Then the argument becomes $1 - 2s^2$, and so
\[
\sigma(\phi_0,\Delta\theta)
= \arccos(1 - 2s^2).
\]

\noindent
Finally, use the identity
\[
\arccos(1 - 2s^2) = 2\arcsin(s), \qquad s \in [0,1],
\]
which follows from $\cos(2\arcsin s) = 1 - 2s^2$ and the principal value ranges of $\arcsin$ and $\arccos$. Therefore,
\[
\sigma(\phi_0,\Delta\theta)
= 2\,\arcsin\!\bigl(\cos\phi_0\,\big|\sin(\Delta\theta/2)\big|\bigr).
\]
\end{proof}

\textit{Two-sided bounds.}  
From \eqref{eq:A-identity} and the monotonicity of $\arcsin$ on $[0,1]$,
\begin{equation}\label{eq:A-bounds}
2\cos\phi_0\,\big|\sin(\Delta\theta/2)\big|
\;\le\;
\sigma(\phi_0,\Delta\theta)
\;\le\;
|\Delta\theta|.
\end{equation}
\begin{proof} 
Fix $\phi_0\in[0,\tfrac{\pi}{2}]$ and $\Delta\theta\in[-\pi,\pi]$.

\noindent\emph{Lower bound.}
Set $u:=\cos\phi_0\,\big|\sin(\tfrac{\Delta\theta}{2})\big|\in[0,1]$.
Since $\arcsin x\ge x$ for $x\in[0,1]$ (e.g.\ because $\sin t\le t$ for $t\ge0$ and taking $t=\arcsin x$ gives $x\le\arcsin x$), from \eqref{eq:A-identity} we obtain
\[
\sigma(\phi_0,\Delta\theta)
=2\,\arcsin(u) \;\ge\; 2u
=2\cos\phi_0\,\big|\sin(\tfrac{\Delta\theta}{2})\big|.
\]

\noindent\emph{Upper bound.}
We will use the identity: 
$\arcsin\!\bigl(\,|\sin(\tfrac{\Delta\theta}{2})|\,\bigr)=\tfrac{|\Delta\theta|}{2}$ (since $|\Delta\theta|/2\in[0,\tfrac{\pi}{2}]$). Since $\arcsin$ is increasing, we get
\[
\sigma(\phi_0,\Delta\theta)
= 2\,\arcsin\!\Bigl(\cos\phi_0\,\big|\sin(\tfrac{\Delta\theta}{2})\big|\Bigr)
\;\le\;
2\,\arcsin\!\Bigl(\big|\sin(\tfrac{\Delta\theta}{2})\big|\Bigr)
= |\Delta\theta|.
\]
This proves \eqref{eq:A-bounds}.
\end{proof}
\textit{Remark.}  
The upper bound becomes equality when $\phi_0=0$ (equator). The lower bound becomes sharp as $\Delta\theta\to 0$, consistent with the local behavior described next.

\subsection{Local expansion near $\Delta\theta=0$}

\textit{Intuition.}  
For points very close in longitude, the small circle behaves almost like a flat circle of radius $\cos\phi_0$. Thus, one expects
\[
\sigma(\phi_0,\Delta\theta)\approx \cos\phi_0|\Delta\theta|.
\]

\medskip
\textit{Formal expansion.}  
As $\Delta\theta\to 0$,
\begin{equation}\label{eq:A-expansion}
\sigma(\phi_0,\Delta\theta)
= \cos\phi_0\,|\Delta\theta| + O(|\Delta\theta|^3).
\end{equation}

\textit{Proof (with guiding remarks).}  
From \eqref{eq:A-identity},
\[
\sigma(\phi_0,\Delta\theta)
= 2\,\arcsin\!\bigl(\cos\phi_0\,|\sin(\Delta\theta/2)|\bigr).
\]
Use $|\sin(\Delta\theta/2)|=\frac{|\Delta\theta|}{2}+O(|\Delta\theta|^3)$ as $\Delta\theta\to0$.  
\textit{Substituting this yields that the argument of $\arcsin$ is $\cos\phi_0\frac{|\Delta\theta|}{2} + O(|\Delta\theta|^3)$.}  
Now apply $\arcsin t = t + O(t^3)$ as $t\to0$ and multiply by $2$ to obtain \eqref{eq:A-expansion}. \qed

\subsection{Geometric interpretation}

The bounds in \eqref{eq:A-bounds} show that, for a fixed longitudinal separation, the geodesic distance is largest on the equator and decreases monotonically with latitude. This reflects the fact that the effective radius of a small circle is $\cos\phi_0$. The local approximation \eqref{eq:A-expansion} confirms that, for small separations, the small circle behaves like a Euclidean circle of that radius.

\section{Smoothing Principle for Block Lengths}

In this appendix we justify a key step used in Proposition~\ref{prop:nondivisible}: when the support is uniformly
spaced on a circle and the loss within a block of $s$ points is $D(s)$, then among all integer
block sizes with fixed total, any configuration minimizing the total distortion uses at most two
adjacent block sizes. This phenomenon is referred to as the \emph{smoothing principle}.

\subsection{Why smoothing is needed}

Suppose we wish to partition $N$ equally spaced points on a circle into $n$ blocks with integer
sizes $m_1, m_2, \dots, m_n$ satisfying $\sum_{j=1}^n m_j = N$. From Lemma~\ref{lem:block}, the minimal
(unnormalized) distortion contributed by a block of $s$ consecutive points is
\[
D(s) = \frac{\Delta^2}{12}(s^2 - 1),
\quad \Delta = \frac{2\pi}{N}.
\]
Thus the total distortion for a given partition is $\sum_{j=1}^n D(m_j)$. The goal is to choose
the $m_j$'s (subject to the sum constraint) so that this total is minimized.

\subsection{Strict convexity of $D(s)$ and its consequence}

The crucial observation is that $D(s)$ is a strictly convex function of the block size $s$. In
fact,
\[
D(s+1) - 2D(s) + D(s-1)
= \frac{\Delta^2}{6} > 0.
\]
This discrete second--difference inequality is the integer analogue of $D''(s) > 0$ and expresses
that ``middle'' values are more efficient than extremes. Consequently:

\begin{quote}
\textit{If two block sizes differ by at least $2$, then redistributing one point from the larger
block to the smaller block strictly decreases the total distortion.}
\end{quote}

Formally, if $a < b-1$ then
\[
D(a) + D(b) > D(a+1) + D(b-1),
\]
so ``smoothing'' the pair $(a,b)$ to $(a+1,b-1)$ strictly reduces the loss. Repeatedly applying
this operation shows that at optimality, all block sizes must be equal or differ by at most $1$.

\subsection{Concrete example}

Imagine $N=15$ points and $n=4$ blocks. Some possible partitions and their unnormalized total distortions are shown below. 
Recall that for a block of size $s$,
\[
D(s)=\frac{\Delta^{2}}{12}(s^{2}-1),
\]
so comparing total distortion amounts to comparing the sums of the $s^{2}$ values of the block sizes.

Some possible partitions and their total distortions
(based on $D(s) \propto s^2$) are:

\[
(1, 4, 4, 6) \quad \longrightarrow \quad (2, 4, 4, 5) \quad \longrightarrow \quad (3, 4, 4, 4).
\]

Each move transfers one point from the largest block to the smallest block, decreasing $D(1)+D(6)$
to $D(2)+D(5)$, and then to $D(3)+D(4)$, reducing the total distortion each time. The process
stops when all sizes differ by at most $1$.

\subsection{The smoothing principle}

We summarize the conclusion:

\begin{quote}
\textbf{Smoothing Principle.}  
Among all integer block sizes $(m_j)$ with fixed total $\sum m_j = N$, the total distortion
$\sum_j D(m_j)$ is minimized when all block sizes are either $m$ or $m+1$, where $m=\lfloor
N/n\rfloor$. Moreover, the number of blocks of size $m+1$ must be $r = N - nm$.
\end{quote}

This principle directly yields the optimal block pattern used in Proposition~\ref{prop:nondivisible} and in the
non--divisible cases of Models II and III.

\section{Worked Micro--Examples (Didactic)}

This appendix provides small illustrative examples showing how the block--midpoint principle and the distortion formulas are applied in practice. These examples serve as quick computational references for beginners.

\medskip

\noindent\textbf{Example C.1 (Great circle, $N = 12$, $n = 5$).}
Here $m = \lfloor 12/5 \rfloor = 2$ and $r = 12 - 5 \cdot 2 = 2$. Thus, the optimal block structure consists of two blocks of size $3$ and three blocks of size $2$. The uniform spacing is
\[
\Delta = \frac{2\pi}{12} = \frac{\pi}{6}.
\]
Using the formula $D(s)=\frac{\Delta^{2}}{12}(s^{2}-1)$, we compute:
\[
D(2)=\frac{\Delta^{2}}{12}(4-1)=\frac{3}{12}\Bigl(\tfrac{\pi}{6}\Bigr)^{2}
= \frac{(\frac{\pi}{6})^{2}}{4},
\qquad
D(3)=\frac{\Delta^{2}}{12}(9-1)=\frac{8}{12}\Bigl(\tfrac{\pi}{6}\Bigr)^{2}
= \frac{2}{3}\Bigl(\tfrac{\pi}{6}\Bigr)^{2}.
\]
Hence,
\[
V_{5,2}(P_{12})
= \frac{1}{12}\bigl(3D(2) + 2D(3)\bigr).
\]
This demonstrates how, in the non--divisible case, optimality is achieved by mixing two adjacent block sizes $2$ and $3$.

\bigskip

\noindent\textbf{Example C.2 (Two small circles, $M = 24$, $n = 6$).}
Here $N = 2M = 48$, and antipodal symmetry requires $n = 2k$, so $k = 3$. Thus, each circle receives $k = 3$ representatives. The block size on each circle is
\[
m = \frac{M}{k} = \frac{24}{3} = 8,
\qquad
\Delta = \frac{2\pi}{24} = \frac{\pi}{12}.
\]
By Theorem~\ref{thm:II-divisible} (Model II divisible case), the exact distortion is
\[
V_{6,2}(P_{N};\phi_{0}) = \frac{k}{M}\,S(8,\phi_{0}) = \frac{3}{24}\,S(8,\phi_{0}).
\]
To compute $S(8,\phi_{0})$, evaluate $\sigma(\phi_{0}, t\Delta)^{2}$ at the eight midpoint--centered offsets
\[
t \in \{-3.5,\,-2.5,\,-1.5,\,-0.5,\,0.5,\,1.5,\,2.5,\,3.5\}.
\]
Summing these values yields $S(8,\phi_{0})$, and substitution gives $V_{6,2}(P_{N};\phi_{0})$.

\bigskip

\noindent\textbf{Example C.3 (Two small circles, full calculation, $M = 24$, $n = 6$).}
This example repeats Example C.2 with all steps made explicit to aid beginners.

There are $N = 2M = 48$ equally spaced sample points on two antipodally symmetric small circles. Since $n = 6 = 2k$, we have $k = 3$ representatives per circle. Thus,
\[
m = \frac{M}{k} = \frac{24}{3} = 8,
\qquad
\Delta = \frac{2\pi}{24} = \frac{\pi}{12}.
\]
By Theorem~\ref{thm:II-divisible},
\[
V_{6,2}(P_{N};\phi_{0}) = \frac{k}{M}\,S(m,\phi_{0})
= \frac{3}{24}\,S(8,\phi_{0}).
\]
To evaluate $S(8,\phi_0)$, we compute the squared geodesic distances from the midpoint of each block. 
The offsets from the midpoint are
\[
t \in \left\{-\tfrac{7}{2}, -\tfrac{5}{2}, -\tfrac{3}{2}, -\tfrac{1}{2},
\tfrac{1}{2}, \tfrac{3}{2}, \tfrac{5}{2}, \tfrac{7}{2}\right\}.
\]
For each offset $t$ we evaluate
\[
\sigma(\phi_{0}, t\Delta)^{2}
= \arccos^{2}\!\bigl(\sin^{2}\phi_{0} + \cos^{2}\phi_{0}\cos(t\Delta)\bigr).
\]
Summing these eight squared distances gives $S(8,\phi_{0})$, and substituting into the expression above yields $V_{6,2}(P_{N};\phi_{0})$.

\section{Algorithmic Appendix: Computing $S(s,\phi_0)$ and $V_{n,2}$}\label{app:algo}
We summarize simple routines to compute the finite sums exactly (within floating precision).

\subsection{Pseudocode: $S(s,\phi_0)$ on one small circle}
\begin{quote}\small
\textbf{Input:} integers $s\ge 2$, $M\ge 2$, real $\phi_0\in(0,\pi/2)$\\
\textbf{Output:} $S(s,\phi_0)=\sum_{j=0}^{s-1}\sigma(\phi_0,(j-\tfrac{s-1}{2})\Delta)^2$, where $\Delta=2\pi/M$
\begin{enumerate}[leftmargin=14pt]
\item $\Delta \leftarrow 2\pi/M$;\quad $S\leftarrow 0$.
\item For $j=0,\dots,s-1$: \\
\hspace*{1.5em} $t\leftarrow j-\frac{s-1}{2}$;\quad $\alpha\leftarrow t\Delta$. \\
\hspace*{1.5em} $u\leftarrow \sin^2(\phi_0)+\cos^2(\phi_0)\cos(\alpha)$;\quad $\sigma\leftarrow \arccos(u)$.\\
\hspace*{1.5em} $S\leftarrow S+\sigma^2$.
\item \textbf{return} $S$.
\end{enumerate}
\end{quote}

\subsection{Pseudocode: $V_{n,2}(P_N^{(\phi_0)})$ on one small circle}
\begin{quote}\small
\textbf{Input:} $N\ge 2$, $n\ge 1$, $\phi_0\in(0,\pi/2)$\\
\textbf{Output:} $V_{n,2}(P_N^{(\phi_0)})$ \\
\begin{enumerate}[leftmargin=14pt]
\item $\Delta \leftarrow 2\pi/N$;\quad $m\leftarrow \lfloor N/n\rfloor$;\quad $r\leftarrow N-nm$.
\item Compute $S_m(\phi_0)$ and $S_{m+1}(\phi_0)$ using the previous routine (with $M\gets N$).
\item \textbf{return} $\frac{1}{N}\big((n-r)S_{m}(\phi_0)+rS_{m+1}(\phi_0)\big)$.
\end{enumerate}
\end{quote}

\subsection*{Pseudocode: $V_{n,2}(P_N;\phi_0)$ on two small circles}
\begin{quote}\small
\textbf{Input:} $M\ge 2$, even $n=2k$, $\phi_0\in(0,\pi/2)$\\
\textbf{Output:} $V_{n,2}(P_N;\phi_0)$ with $N=2M$ \\
\begin{enumerate}[leftmargin=14pt]
\item $m\leftarrow M/k$ (assume $k\mid M$);\quad $\Delta\leftarrow 2\pi/M$.
\item Compute $S(m,\phi_0)$ via the first routine (with $M$).
\item \textbf{return} $\frac{k}{M}\,S(m,\phi_0)$.
\end{enumerate}
\end{quote}

\subsection{Commented Python exam (verbatim)}
\begin{verbatim}
import math

def sigma(phi, dtheta):
    # central angle for points on same small circle:
    # sigma(phi, dtheta) = arccos( sin^2(phi) + cos^2(phi) * cos(dtheta) )
    u = (math.sin(phi)**2) + (math.cos(phi)**2)*math.cos(dtheta)
    # guard rounding errors: clip to [-1, 1]
    u = min(1.0, max(-1.0, u))
    return math.acos(u)

def S_smallcircle(s, phi, M):
    # Delta = 2*pi / M (one small circle with M azimuths)
    Delta = 2.0*math.pi / M
    S = 0.0
    # centered offsets j - (s-1)/2 work for even/odd s
    for j in range(s):
        t = j - 0.5*(s-1.0)
        alpha = t*Delta
        S += sigma(phi, alpha)**2
    return S

def V_onecircle(N, n, phi):
    # exact discrete formula (Model III)
    m = N // n
    r = N - n*m
    Sm   = S_smallcircle(m,   phi, N)
    Sm1  = S_smallcircle(m+1, phi, N)
    return ((n - r)*Sm + r*Sm1) / N

def V_twocircles(M, n_even, phi):
    # n_even = 2*k and assume k divides M (Model II)
    k = n_even // 2
    assert M % k == 0
    m = M // k
    return (k / M) * S_smallcircle(m, phi, M)

# Examples:
print("One small circle: N=120, n=6, phi=0.6 ->", V_onecircle(120, 6, 0.6))
print("Two small circles: M=60, n=4, phi=0.5 ->", V_twocircles(60, 4, 0.5))
\end{verbatim}
\end{document}